\newtheorem{theorem}{Theorem}
\newtheorem{proposition}{Proposition}
\newtheorem{corollary}{Corollary}
\newcommand{\eq}[1]{\begin{equation}\label{#1}}
\newcommand{\en}{\end{equation}}
\newenvironment{proof}{\begin{trivlist}
                       \item[]{\bf Proof.}
                       \hspace{0cm} }{\hfill $\Box$
                       \end{trivlist}}
\begin{document}

\begin{frontmatter}

\title{Preconditioned steepest descent-like methods for symmetric indefinite systems
\tnoteref{mytitlenote2}}
\tnotetext[mytitlenote2]{Results are partially based on  
PhD thesis~\cite{thesis} of the first coauthor.}

%
%
%
%

\author{Eugene Vecharynski\corref{mycorrespondingauthor}}
\cortext[mycorrespondingauthor]{Corresponding author}
\ead{eugene.vecharynski@gmail.com}
\ead[url]{http://evecharynski.com/}
\address{Computational Research Division, Lawrence Berkeley National Laboratory; 1 Cyclotron Road, Berkeley, CA 94720, USA}

\author{Andrew Knyazev}
\ead{Andrew.Knyazev@merl.com}
\ead[url]{{http://www.merl.com/people/knyazev}}
\address{Mitsubishi Electric Research Laboratories; 201 Broadway Cambridge, MA 02139, USA}

%
%

\begin{abstract}
This paper addresses the question of what exactly is an analogue of 
the preconditioned steepest descent (PSD) algorithm in the case of a symmetric indefinite system
with an SPD preconditioner. We show that a basic PSD-like scheme for an SPD-preconditioned 
symmetric indefinite system is mathematically equivalent to the restarted PMINRES, 
where restarts occur after every two steps.
A convergence bound is derived. If certain information on the spectrum
of the preconditioned system is available, we present a simpler PSD-like algorithm that performs  
only one-dimensional residual minimization. 
%
Our primary goal is to bridge the theoretical gap between optimal (PMINRES) and 
PSD-like methods for solving symmetric indefinite systems, as well as point 
out situations where the PSD-like schemes can be used in practice.

\end{abstract}

\begin{keyword}
linear system \sep MINRES \sep steepest descent \sep convergence \sep symmetric indefinite 
\sep preconditioning
\MSC[2010] 65F10 \sep   	65N22 \sep 65Y20 
\end{keyword}

\end{frontmatter}


\section{Introduction}\label{sec:intro}

The Preconditioned Steepest Descent (PSD) iteration is a well known precursor of the optimal 
Preconditioned Conjugate Gradient (PCG) algorithm for solving Symmetric Positive Definite (SPD) linear systems.
Given a system~$Ax = f$ with an SPD matrix $A$ and an SPD preconditioner $T$ 
the method at each iteration $i$ updates the current approximate solution $x^{(i)}$ as  
\begin{equation}\label{eq:psd}
x^{(i+1)} = x^{(i)} + \alpha^{(i)} T (f - A x^{(i)}), \quad i = 0, 1, \dots; 
\end{equation}
where the iterative parameter $\alpha^{(i)}$ 
is chosen to ensure that the new approximation $x^{(i+1)}$ has the smallest, 
among all vectors of the form $x +  \alpha T(f - Ax)$, $A$-norm of the error $x^{(i+1)}-x$.  

The optimality of PCG stems from its ability to construct approximations~$x^{(i)}$
that globally minimize the $A$-norm of the error over an expanding sequence of Krylov subspaces while relying on 
a short-term recurrence~\cite{Axelsson:94, Greenbaum:97}. In contrast, the PSD iteration~\eqref{eq:psd}
is \textit{locally} optimal, searching for a best approximation $x^{(i+1)}$ only in a single direction,  
given by the preconditioned residual $T(f - Ax^{(i)})$.   


The lack of global optimality in PSD leads to a lower convergence rate. In particular,
instead of the asymptotic convergence factor $(\sqrt{\kappa} - 1)/(\sqrt{\kappa} + 1)$, guaranteed by 
the optimal PCG, each PSD step is guaranteed to reduce the error $A$-norm 
by the factor $(\kappa - 1)/(\kappa + 1)$, e.g.,\ ~\cite{Axelsson:94, Greenbaum:97}, 
and the error Euclidean norm by the factor $1-1/\kappa$, see \cite{Knyazev1988195}, where $\kappa$ 
denotes a spectral condition number of the preconditioned matrix $TA$. 
Nevertheless, despite its generally slower 
convergence, PSD (and even simpler iterations, such as Jacobi or Gauss-Seidel) 
finds its way to practical applications, due to a reduced amount of memory and  
computations
per iteration~\cite{Nagy.Palmer:03, Knyazev.Lashuk:07, Trottenberg.Oosterlee.Schuller:01}.


If the matrix $A$ is symmetric \textit{indefinite}, then an optimal analogue of PCG
is given by the preconditioned MINRES (PMINRES) algorithm~\cite{Paige.Saunders:75, Choi.Paige.Saunders:11}
\footnote{PMINRES is mathematically equivalent to preconditioned 
Orthomin(2) and Orthodir(3) algorithms (e.g.,~\cite{Greenbaum:97}) that can as well be viewed as optimal analogues of PCG for 
symmetric indefinite systems. 
However, Orthomin(2) can break down, whereas Orthodir(3) has 
a higher computational cost compared to PMINRES. Therefore, throughout, 
we do not discuss these two alternative schemes, 
and consider only the PMINRES algorithm.}.
Similar to PCG, PMINRES utilizes a short-term recurrence to achieve optimality 
with respect to the expanding sequence of the Krylov subspaces~\cite{Greenbaum:97, Elman.Silvester.Wathen}. 
However, since $A$ is indefinite, minimization of the error $A$-norm is no longer feasible. 
Instead, PMINRES minimizes the $T$-norm of the residual $f - Ax^{(i+1)}$, where $T$ is a given SPD preconditioner. 

The symmetry and positive definiteness of the preconditioner is generally critical for PMINRES.
Under this assumption the method is guaranteed to converge, with the convergence bound
described in terms of the spectrum 
$$\Lambda(TA) = \left\{\lambda_1 \leq \ldots \leq \lambda_p \leq \lambda_{p+1} \leq \ldots \leq \lambda_n \right\}$$ 
of the preconditioned matrix $TA$. In particular, assuming that
$\Lambda(TA)$ is located within the union of two equal-sized intervals $\mathcal{I} = \left[a,b\right] \bigcup \left[c,d\right]$,
where $a \leq \lambda_1 \leq \lambda_p \leq b < 0 < c \leq \lambda_{p+1} \leq \lambda_n \leq d$, the following
bound on the residual $T$-norm holds: 
\begin{equation}\label{eq:minres_cv}
\|r^{(i)}\|_T \leq 2 \left( \frac{\sqrt{|ad|} - \sqrt{|bc|}}{\sqrt{|ad|} + \sqrt{|bc|}} \right)^{[i/2]} \|r^{(0)}\|_T,
\end{equation}
where $i$ is the PMINRES iteration number and 
$\| \cdot \|_T \equiv (\cdot \ , \ T \ \cdot)^{1/2}$~\cite{Greenbaum:97, Elman.Silvester.Wathen}.

While the optimal PMINRES algorithm is used in a variety of applications and has convergence behavior that 
is relatively well studied, to the best of our knowledge, little or none has been said 
about PSD-like methods for symmetric indefinite systems, where the preconditioner is SPD, 
i.e., is exactly the same as in PMINRES. 
For example, as we explain in the next section, iterations of the form of~\eqref{eq:psd} cannot generally result in a convergent scheme. 

In this paper we address the question of what exactly is an analogue of PSD in the case of a symmetric indefinite system
with an SPD preconditioner.~In~particular, exactly the same way PSD can be interpreted as a form of PCG restarted at 
every step, we show that a basic PSD-like scheme for an SPD-preconditioned symmetric indefinite 
system is mathematically equivalent to the restarted PMINRES, where restarts occur after every two steps,
i.e., the residual $T$-norm is minimized over two-dimensional subspaces. We derive a convergence bound, 
which yields a stepwise convergence factor that is similar 
to the~one in~\eqref{eq:minres_cv}~up~to the presence of square roots, analogously to the
PCG/PSD case for SPD~systems.    

We also demonstrate that,
if certain information about the spectrum of~the preconditioned matrix is at hand, 
then the two-dimensional
minimization can~be turned into minimization over a one-dimensional subspace, 
while guaranteeing the same convergence bound. 
Such information can also provide an interesting possibility for randomization of the descent direction, 
which we as well~briefly discuss in this~paper.


Although the primary goal of this work is to bridge the theoretical gap between optimal 
(PMINRES) and PSD-like methods for solving symmetric indefinite systems,
we also address several practical issues. In particular, we discuss implementations 
of the PSD-like algorithms, which should be fulfilled carefully in order to ensure
a minimal amount of computation and storage per~iteration.

Because of the inferior convergence rate, the PSD-like methods cannot 
be generally regarded as an alternative to the optimal PMINRES.
However, we point out several specific situations where the use of the more economical PSD-like
iterations is appropriate and can be preferred in practice.
Such situations arise, e.g., when only a few iterations of a linear solver are needed,
due to a high preconditioning quality, good initial guess, or a relaxed requirement
on the accuracy of the approximate solution. 
%
For example, this setting appears in the framework of preconditioned interior eigenvalue 
calculations, where a preconditioner can be defined
by several steps of a linear solver applied to a shifted system of the form 
$(A - \sigma B)w=r$~\cite{Szyld.Ve.Xue:15, Ve.Yang.Xue:15, Cai.Bai.Pask.Sukumar:13}. 
The PSD-like methods can also be used as smoothers in multigrid 
schemes~\cite{Briggs.Henson.McCormick:00,Trottenberg.Oosterlee.Schuller:01}.
In any of these contexts, the savings in storage and number of inner products offered 
by the PSD-like algorithms can potentially be beneficial for achieving the best performance.

The paper is organized as follows. In Section \ref{sec:psi}, we present a basic form of the 
PSD-like iteration for solving a symmetric indefinite system with an SPD preconditioner, which is 
based on two-dimensional minimization of the residual $T$-norm, and derive the  
convergence bound. In Section~\ref{sec:prmm}, we show~how some knowledge of spectrum of the preconditioned matrix
can simplify the~PSD-like iteration, leading to a scheme which minimizes the residual over a one-dimensional
subspace. A simple randomization strategy is described in the~same~section. 
We consider several examples in Section~\ref{sec:num}. Conclusions can be found
in Section~\ref{sec:concl}.

\section{The PSD-like iteration for symmetric indefinite systems}\label{sec:psi}

Given an SPD preconditioner $T$, a candidate PSD-like scheme for
symmetric indefinite systems can be immediately defined by directly applying iterations of the form~\eqref{eq:psd}.   
%
In this case,
the corresponding error equation has the~form
\begin{equation}\label{eq:error}
e^{(i+1)} = (I - \alpha^{(i)} TA)e^{(i)},
\end{equation}   
where $e^{(i)} = x^* - x^{(i)}$ is the error at step $i$ and $x^{*} = A^{-1}f$ is the exact solution.

Let $y_j$ be the eigenvectors of the preconditioned matrix $TA$ associated with the eigenvalues $\lambda_j$,
and suppose that $e^{(i)} = \sum_{j=1}^n c_j y_j$ represents an expansion of error in the eigenvector basis 
with coefficients~$c_j$. Then, according to~\eqref{eq:error},
\begin{equation}\label{eq:error_component}
e^{(i+1)} = \sum_{j=1}^n (1 - \alpha^{(i)} \lambda_j) c_j y_j.
\end{equation} 
Since $\Lambda(TA)$ contains both positive and negative eigenvalues, 
for any choice of the iteration parameter $\alpha^{(i)}$, there exist $\lambda_j$'s of an opposite sign, 
i.e.,  such that $\text{sign}(\lambda_{j}) = - \text{sign}(\alpha^{(i)})$. 
In this case, the corresponding factors 
$\mu_j \equiv 1 - \alpha^{(i)} \lambda_j = 1 + |\alpha^{(i)} \lambda_j|$
in~\eqref{eq:error_component} are greater than one. 

Thus, regardless of the choice of $\alpha^{(i)}$, when applied to a symmetric indefinite system with an SPD
preconditioner, iteration~\eqref{eq:psd} will amplify the error in certain directions. Hence, it does not
deliver a convergent scheme, unless initial guess is specially chosen. Therefore, we cannot 
consider~\eqref{eq:psd} as an analogue of PSD in the indefinite case.    


A possible angle to look at~\eqref{eq:psd} is as to a restarted Krylov subspace method. 
In particular, the PSD algorithm for SPD systems
can be interpreted as PCG that is restarted at each step. 
The same viewpoint can be adopted for systems with an indefinite $A$ and an SPD $T$. 
In this case, we can define an analogue of PSD as a properly restarted version of PMINRES. 
As shown above, restarting PMINRES at every step\footnote{Such a scheme is equivalent to 
preconditioned Orthomin(1); see, e.g.,~\cite{Greenbaum:97}.}, 
which yields iteration of the form~\eqref{eq:psd}, fails to ensure the convergence. 
Therefore, we are interested in determining the frequency of restarts which,
on the one hand, keeps the size of the local minimization subspace as small as possible and, on the other hand, 
guarantees the convergence.    

Following these considerations, 
it is natural to consider an iterative scheme that is obtained from PMINRES by restarting 
the method after 
\textit{every two steps}. This gives iteration of the form
\begin{equation}\label{eq:psd_indef}
x^{(i+1)} = x^{(i)} + \beta^{(i)} T r^{(i)} + \alpha^{(i)} TAT r^{(i)} , \quad i = 0, 1, \dots; 
\end{equation}
where the parameters $\alpha^{(i)}$ and $\beta^{(i)}$ are chosen to minimize the residual $T$-norm, i.e.,
are such that 
\begin{equation}\label{eqn:pm2_opt_cond}
\|r^{(i+1)}\|_T = \min_{u \in \text{span}\left\{ T r^{(i)}, TATr^{(i)}\right\}} \|r^{(i)} - Au\|_T. 
\end{equation}
In what follows, we prove that~\eqref{eq:psd_indef}--\eqref{eqn:pm2_opt_cond} converges 
at a linear rate that is similar to that of PSD and, hence, represents a true analogue 
of PSD for 
symmetric indefinite systems.    

\subsection{The convergence bound}

Let us first consider
a stationary iteration of the form
\begin{equation}\label{eqn:psi_sid}
\begin{array}{ccl}
r^{(i)}   & = & f - A x^{(i)}, \ w^{(i)} = T r^{(i)}, \ s^{(i)} = TA w^{(i)}, \ l^{(i)}  =  s^{(i)} - \beta w^{(i)}, \\
x^{(i+1)} & = & x^{(i)} + \alpha l^{(i)}, \quad i = 0,1,\ldots,
\end{array}
\end{equation}
where the parameters $\alpha > 0$ and $\beta$ remain constant at all steps. 
Scheme~\eqref{eqn:psi_sid} can be viewed as a preconditioned Richardson-like method~\cite{Axelsson:94} 
with the search direction given by $l^{(i)}$, which is a linear combination of 
$w^{(i)} = Tr^{(i)}$ and $s^{(i)} = TAw^{(i)}$. 
The following theorem specifies the values of 
$\alpha$ and $\beta$ that yield the convergence of~\eqref{eqn:psi_sid}, 
and states the corresponding convergence bound.
\begin{theorem}\label{thm:psi_cv}
Let iterations~(\ref{eqn:psi_sid}) be applied to a system $Ax = f$
with a nonsingular symmetric indefinite $A$ and an SPD preconditioner $T$, and
assume that the spectrum of $TA$ is enclosed within the pair of intervals 
$\mathcal{I} = [a,b] \bigcup [c,d]$ of equal length.
If $b < \beta < c$ and $0 < \alpha < \tau_{\beta}$, 
where $\tau_{\beta} = 2/\displaystyle \max_{\lambda \in \left\{a,d\right\}}(\lambda^2 - \beta \lambda)$, then 
\begin{equation}\label{eqn:cv}
\frac{\|r^{(i+1)}\|_T}{\|r^{(i)}\|_T} \leq \rho, \ 
\quad \rho = \max_{\lambda \in \left\{a,b,c,d\right\}} \left|1 - \alpha (\lambda^2 - \beta \lambda) \right|<1.
\end{equation}
Moreover, the convergence with optimal factor
\begin{equation}\label{eqn:opt_cv}
\rho \equiv \rho_{opt} = \frac{|ad| - |bc|}{|ad| + |bc|}
\end{equation}
corresponds to the choice $\beta \equiv \beta_{opt} = c - \left| b \right|$ 
and $\alpha \equiv \alpha_{opt} = 2/(| b | c + |a|d)$.
\end{theorem}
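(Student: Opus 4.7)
The plan is to turn the $T$-norm contraction into a bound on a scalar quadratic $q(\lambda) = 1 - \alpha(\lambda^2 - \beta\lambda)$ over $[a,b]\cup[c,d]$, and then carry out a one-variable analysis.

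First, from $l^{(i)} = (TA - \beta I)T r^{(i)}$ I would derive the residual recurrence
\[
r^{(i+1)} = r^{(i)} - \alpha A l^{(i)} = \bigl(I - \alpha(ATAT - \beta AT)\bigr)r^{(i)}.
\]
To isolate the $T$-norm cleanly, I would set $\tilde r = T^{1/2}r$ and $B = T^{1/2}AT^{1/2}$; then $B$ is symmetric and $TA = T^{1/2}BT^{-1/2}$, so $B$ and $TA$ share the spectrum. A short calculation gives $\tilde r^{(i+1)} = q(B)\,\tilde r^{(i)}$, and since $\|r\|_T = \|\tilde r\|_2$ and $q(B)$ is symmetric,
\[
\frac{\|r^{(i+1)}\|_T}{\|r^{(i)}\|_T} \;\leq\; \|q(B)\|_2 \;=\; \max_{\lambda \in \Lambda(TA)}|q(\lambda)|.
\]

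Next I would show that this maximum is attained at one of the endpoints $\{a,b,c,d\}$ and that $\rho<1$. Writing $g(\lambda) = \lambda(\lambda - \beta)$, the roots of $g$ are $0$ and $\beta$, both in the gap $(b,c)$ by hypothesis, so $g > 0$ on both intervals. A short case split on the sign of $\beta$, using $b<0<c$, also puts the vertex $\beta/2$ of $g$ inside $(b,c)$, so $g$ (and hence $q$) is monotone on each of $[a,b]$ and $[c,d]$. The maximum of $|q|$ on a monotone segment is attained at an endpoint, so $\max_{[a,b]\cup[c,d]}|q(\lambda)| = \max_{\lambda\in\{a,b,c,d\}}|q(\lambda)| = \rho$. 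Since $g>0$ and $\alpha>0$ yield $q<1$, the bound $\rho<1$ reduces to $q(\lambda) > -1$, i.e.\ $\alpha g(\lambda) < 2$; by monotonicity, the endpoint maximum of $g$ equals $\max(g(a),g(d))$, which is exactly $\alpha < \tau_\beta$.

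For the optimal factor I would verify equioscillation and then rule out any strict improvement. Substituting $\beta_{opt} = c - |b|$ into $g$ and using the equal-length condition $b - a = d - c$ (equivalently $|a| + c = |b| + d$) yields the telescoping identities $g(b) = g(c) = |b|c$ and $g(a) = g(d) = |a|d$. Then $\alpha_{opt} = 2/(|a|d + |b|c)$ gives $q(a) = q(d) = -\rho_{opt}$ and $q(b) = q(c) = +\rho_{opt}$, so all four endpoint magnitudes coincide with $\rho_{opt}=(|a|d - |b|c)/(|a|d + |b|c)$. The main obstacle is optimality itself: assume a strictly better $(\alpha',\beta')$ and set $\delta q = q' - q$, a quadratic with $\delta q(0) = 0$. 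Reducing all four endpoint magnitudes below $\rho_{opt}$ forces the sign pattern $+,-,-,+$ for $\delta q$ at $a < b < c < d$ (opposite to that of $q$). Writing $\delta q(\lambda) = \lambda\,\ell(\lambda)$ with $\ell$ affine, this pattern demands three sign changes of $\ell$, which is impossible for a linear function. Hence no improvement exists, and $(\alpha_{opt},\beta_{opt})$ attains $\rho_{opt}$.
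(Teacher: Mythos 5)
Your proposal is correct. The first half coincides with the paper's argument: the same symmetrization (you use $B=T^{1/2}AT^{1/2}$, the paper conjugates $S_\beta=(TA-\beta I)TA$ by $T^{1/2}$, which is the same thing), reduction to the scalar factor $|1-\alpha(\lambda^2-\beta\lambda)|$ over $\mathcal{I}$, positivity of $\lambda^2-\beta\lambda$ on $\mathcal{I}$ exactly when $b<\beta<c$, and the endpoint localization of the extrema, which gives \eqref{eqn:cv} and the admissibility condition $\alpha<\tau_\beta$. Where you genuinely diverge is the optimality of $(\alpha_{opt},\beta_{opt})$: the paper performs an explicit two-stage minimization, first optimizing $\alpha$ for fixed $\beta$ to get $\alpha_{opt}(\beta)$ and $\rho_{opt}(\beta)=(\tilde\kappa(\beta)-1)/(\tilde\kappa(\beta)+1)$ with $\tilde\kappa(\beta)$ as in \eqref{eq:rho_beta}, and then showing by a direct perturbation computation that $\beta_*=c-|b|$ minimizes $\tilde\kappa(\beta)$; you instead verify equioscillation ($q(a)=q(d)=-\rho_{opt}$, $q(b)=q(c)=+\rho_{opt}$, which I checked is correct under the equal-length assumption) and then rule out any strictly better pair $(\alpha',\beta')$ by a Chebyshev-type alternation argument: the difference $\delta q$ is a quadratic vanishing at $0$, so $\delta q(\lambda)=\lambda\,\ell(\lambda)$ with $\ell$ affine, and the required sign pattern $+,-,-,+$ at $a<b<c<d$ (combined with the signs of $\lambda$ itself) would force three sign changes of $\ell$ --- impossible. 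Your route is arguably cleaner and establishes optimality over all real $(\alpha',\beta')$ in one stroke, without normalizing to the admissible set; the paper's route is more computational but produces the intermediate quantities $\alpha_{opt}(\beta)$ and $\rho_{opt}(\beta)$ for arbitrary $\beta\in(b,c)$, which are reused later (Corollary~\ref{thm:pm1_cv} for PSDI-1D with a non-optimal $\beta$), so your shortcut proves the theorem but not that reusable by-product.
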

\begin{proof}
Let $S_{\beta} = \left( TA  - \beta I \right) TA$. Then the equation for preconditioned residuals of 
iteration~\eqref{eqn:psi_sid} can be written in the form
$Tr^{(i+1)} = ( I - \alpha S_{\beta} ) Tr^{(i)}$,~and 
\[
\|r^{(i+1)}\|_T^2 = ( T^{-1}( I - \alpha S_{\beta} ) Tr^{(i)}, ( I - \alpha S_{\beta} ) Tr^{(i)}) = ( Q y,  Q  y),
\]
where $Q = I - \alpha T^{-1/2} S_{\beta} T^{1/2}$ is a symmetric matrix and $y = T^{1/2} r^{(i)}$. 
Hence, 
\[
\| r^{(i+1)} \|_T^2 \leq  \lambda_{\max}(Q^2) \|r^{(i)}\|_T^2,
\]
where $\lambda_{\max}(Q^2)$ denotes the largest eigenvalue of $Q^2$.  Since $T^{-1/2} S_{\beta} T^{1/2}$ 
is similar to $S_{\beta}$, both matrices have the same eigenvalues 
$\mu_{\beta} (\lambda_j)$, 
where $\mu_{\beta}(\lambda) =  \lambda^2  - \beta \lambda$ and $\lambda_j \in \Lambda(TA)$. 
Thus, 
$$\lambda_{\max}(Q^2) = \max_{\lambda \in \Lambda(TA)} (1 - \alpha \mu_{\beta} (\lambda) )^2
\leq \max_{\lambda \in \mathcal{I}} (1 - \alpha \mu_{\beta} (\lambda) )^2,$$ and
therefore
\begin{equation}\label{eq:res_nrm}
\frac{\| r^{(i+1)}\|_T}{\|r^{(i)}\|_T} \leq 
\rho \equiv \max_{\lambda \in \mathcal{I}} |1 - \alpha \mu_{\beta} (\lambda)| .
\end{equation}

We now determine the values of parameters 
$\alpha$ and $\beta$ that guarantee that $|1 - \alpha \mu_{\beta} (\lambda)| < 1$ for all 
$\lambda \in \mathcal{I}$. Clearly, this is possible only if the value of $\beta$ is
chosen to ensure that $\mu_{\beta} (\lambda)$ is 
of the same sign for all $\lambda \in \mathcal{I}$. 
Therefore, since iteration~\eqref{eqn:psi_sid} assumes that $\alpha > 0$, we require that  
$\beta$'s are such that $\mu_{\beta}(\lambda)$ is positive for all $\lambda \in \mathcal{I}$. 
Since $\mu_{\beta}(\lambda)$ is a parabola, which is concave up with zeros at $0$ and~$\beta$,
$\mu_{\beta}(\lambda) > 0$ on $\mathcal{I}$ if and only if $b < \beta < c$; see Figure~\ref{fig:parab}.

\begin{figure}[h] 
 \begin{center} 	 
 \includegraphics[width = 6cm]{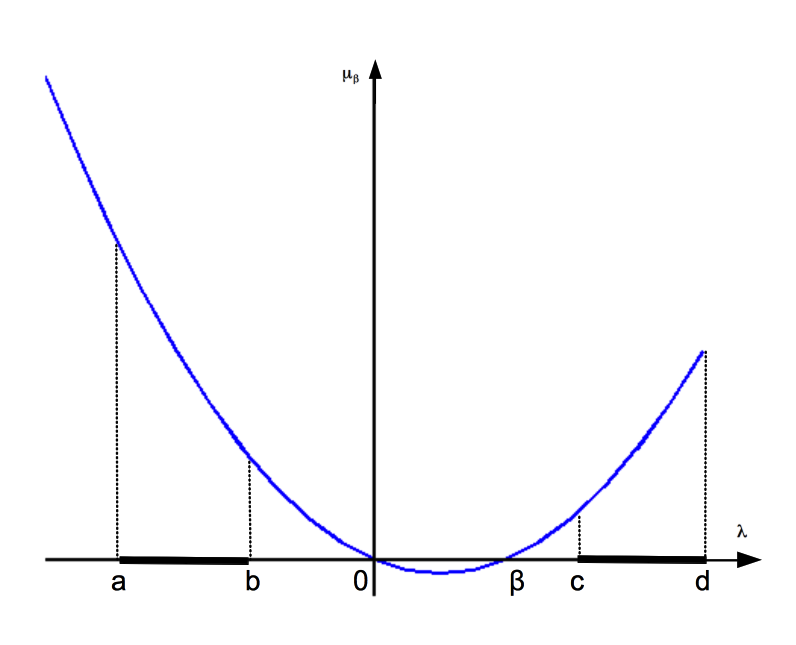} 
 \end{center} 
 \caption{Illustration of $\mu_{\beta}(\lambda) = \lambda^2 - \beta \lambda$, where $\beta > 0$ and  
$\mathcal{I} = [a,b] \cup [c,d]$ .}\label{fig:parab} 
\end{figure}

Given a value $\beta$, 
such that $\mu_{\beta}(\lambda) > 0$ for any $\lambda \in \mathcal{I}$ ($b < \beta < c$), we 
look for parameters $\alpha$ that ensure $\left| 1 - \alpha \mu_{\beta}(\lambda) \right| < 1$. 
Solving this inequality for~$\alpha$ immediately reveals that 
$\left| 1 - \alpha \mu_{\beta}(\lambda) \right|<1$ 
for any $\lambda \in \mathcal{I}$ if 
$0 < \alpha < \tau_{\beta}$, where 
\[
\tau_{\beta} = 2/\displaystyle \max_{\lambda \in \mathcal{I}} \mu_{\beta}(\lambda)
= 2/\displaystyle \max_{\{a,d\}} ( \lambda^2  - \beta \lambda ) , 
\]
with the last equality following from the fact that 
$\mu_{\beta}(\lambda)$ 
attains its maximum on~$\mathcal{I}$ either at $a$ or $d$ (minimum is achieved at $b$ or $c$),
i.e.,
\begin{equation}\label{eq:maxmin_nu}
\max_{\lambda \in \mathcal{I}} \mu_{\beta}(\lambda) = \max_{\lambda \in \{a,d\}} (\lambda^2 - \beta \lambda),\quad 
\min_{\lambda \in \mathcal{I}} \mu_{\beta}(\lambda) = \min_{\lambda \in \{b,c\}} (\lambda^2 - \beta \lambda);
\end{equation}
see Figure~\ref{fig:parab}. 
%
%
%
Thus, for $b < \beta < c$ and $0 < \alpha < \tau_{\beta}$, 
we have $\left| 1 - \alpha \mu_{\beta}(\lambda) \right| < 1$ 
for any $\lambda \in \mathcal{I}$, and therefore the factor 
$\rho$ in~\eqref{eq:res_nrm} is less than $1$. 
Furthermore, the maximum of $|1 - \alpha \mu_{\beta} (\lambda)|$ over $\mathcal{I}$ in~\eqref{eq:res_nrm}
is given either by $| 1 - \alpha \max_{\lambda \in \mathcal{I}} \mu_{\beta} (\lambda) |$
or by $| 1 - \alpha \min_{\lambda \in \mathcal{I}} \mu_{\beta} (\lambda)|$. 
Hence, using~\eqref{eq:maxmin_nu}, we obtain the expression for~$\rho$ as in~\eqref{eqn:cv},     
which 
completes the proof of the first part of the theorem.  

Next, we determine the values of $\alpha$ and $\beta$ that yield the smallest~$\rho$, 
i.e., give an optimal convergence rate. To do so, we first fix an arbitrary $\beta \in (b,c)$
and search for the value of $\alpha$, denoted by $\alpha_{opt} (\beta)$, that minimizes
$\rho$ in~\eqref{eq:res_nrm}. Since, as discussed above, 
$\rho = \max \{ |1 - \alpha \min_{\lambda \in \mathcal{I}} \mu_{\beta}(\lambda)|, |1 - \alpha \max_{\lambda \in \mathcal{I}} \mu_{\beta}(\lambda)| \}$, the optimal value $\alpha_{opt}(\beta)$
is given by 
\begin{equation}\label{eqn:alpha_opt_beta}
\alpha_{opt} (\beta)  =  
 \frac{2}{\displaystyle \min_{\lambda \in \mathcal{I}} \mu_{\beta} (\lambda) + 
 \max_{\lambda \in \mathcal{I}} \mu_{\beta} (\lambda)} =
 \frac{2}{\displaystyle \min_{\lambda \in \{b,c\}} (\lambda^2 - \beta \lambda) + 
 \max_{\lambda \in \{a,d\}} (\lambda^2 - \beta \lambda)}. 
\end{equation}  
For this choice of $\alpha$,  
$
1 - \alpha \min_{\lambda \in \mathcal{I}} \mu_{\beta} (\lambda)  = -( 1 - \alpha \max_{\lambda \in \mathcal{I}} \mu_{\beta} (\lambda) ),
$ 
and, hence, $\rho$ in~\eqref{eq:res_nrm} is given by 
$\rho_{opt}(\beta) = 1 - \alpha_{opt} (\beta) \min_{\lambda \in \mathcal{I}} \mu_{\beta} (\lambda)$.
It is then easy to check, using~\eqref{eq:maxmin_nu} and \eqref{eqn:alpha_opt_beta}, that  
$\rho \equiv \rho_{opt}(\beta)$ can be written in the form
\begin{equation}\label{eq:rho_beta}
\rho_{opt}(\beta) = \frac{\tilde \kappa(\beta) - 1}{\tilde \kappa(\beta) + 1}, \quad 
\tilde \kappa(\beta) = \frac{\displaystyle \max_{\lambda \in \{a,d\}} (\lambda^2  - \beta \lambda)}
{\displaystyle \min_{\lambda \in \{b,c\}} (\lambda^2  - \beta \lambda)}.
\end{equation}
Thus, in order to achieve the smallest $\rho$, it remains to find the value of $\beta$,
denoted by $\beta_{opt}$,
that minimizes $\tilde \kappa(\beta)$ in~\eqref{eq:rho_beta} over all $b < \beta < c$.

Let $\beta \equiv \beta_*= c - \left| b \right|$. In this case, the parabola 
$\mu_{\beta_*}(\lambda)$ is located symmetrically 
with respect to the intervals $[a,b]$ and $[c,d]$. In particular, this implies that 
the largest value of $\mu_{\beta_*}(\lambda) = \lambda^2 - \beta_* \lambda$
is attained simultaneously at $a$ and $d$ and the smallest
value simultaneously occurs at $b$ and $c$. 
Thus, by substituting $\beta_*= c - \left| b \right|$ into $\tilde \kappa (\beta)$ in~\eqref{eq:rho_beta}
and using the assumption that $d - c = |a| - |b|$, 
we obtain
\begin{equation}\label{eq:kappa_beta}
\tilde \kappa(\beta_*) = \frac{d^2 - \beta_{*} d}{c^2 - \beta_{*} c} =  
\displaystyle \left(\frac{d}{c}\right)\left(\frac{|b| + d - c}{|b|}\right) = \frac{ad}{bc}. 
\end{equation}
We now observe that $\beta_{*}$ minimizes $\tilde \kappa (\beta)$ in (\ref{eq:rho_beta}), i.e.,
$\tilde \kappa (\beta_*)$ in~\eqref{eq:kappa_beta} is the smallest for all $\beta$ in $(b,c)$.
Indeed, if $\varepsilon > 0$ is an arbitrary number, then
\[
\tilde \kappa (\beta_* + \varepsilon) = \frac{(a^2 - \beta_* a) - \varepsilon a}{(c^2 - \beta_* c) - \varepsilon c} 
> \frac{a^2 - \beta_* a}{c^2 - \beta_* c} = \frac{d^2 - \beta_* d}{c^2 - \beta_* c} = \tilde \kappa (\beta_*). 
\]
The same can be shown for $\varepsilon < 0$.
Thus, $\beta_{opt} \equiv \beta_* = c - |b|$. 
The optimal convergence factor $\rho \equiv \rho_{opt}$ is then given 
by~\eqref{eqn:opt_cv}, and is obtained by evaluating $\rho_{opt}(\beta)$ in~\eqref{eq:rho_beta}
for $\beta = \beta_*$ using~\eqref{eq:kappa_beta}.  
%
Finally, from~\eqref{eqn:alpha_opt_beta}, 
we derive the optimal value of $\alpha$, given by $\alpha_{opt} \equiv \alpha_{opt} (\beta_*) = 2/(| b | c + |a|d)$.
\end{proof}

The convergence of the PSD-like iteration~\eqref{eq:psd_indef}--\eqref{eqn:pm2_opt_cond} 
follows immediately from Theorem~\ref{thm:psi_cv} and is characterized by the corollary below.  
\begin{corollary}\label{thm:pm2_cv}
Method~\eqref{eq:psd_indef}--\eqref{eqn:pm2_opt_cond} converges to the solution 
with residuals satisfying 
\begin{equation}\label{eqn:pm2_cv}
\frac{\|r^{(i+1)}\|_T}{\|r^{(i)}\|_T} \leq \frac{|ad| - |bc|}{|ad| + |bc|}.
\end{equation}
\end{corollary}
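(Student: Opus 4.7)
The plan is to deduce this corollary as a direct consequence of Theorem~\ref{thm:psi_cv} by a standard subspace-inclusion argument, exploiting the fact that the PSD-like scheme~\eqref{eq:psd_indef}--\eqref{eqn:pm2_opt_cond} performs an optimal minimization over exactly the same two-dimensional subspace in which the stationary iteration~\eqref{eqn:psi_sid} chooses its update direction.

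First I would make the structural observation that any update produced by~\eqref{eqn:psi_sid} has the form
\[
x^{(i+1)} - x^{(i)} = \alpha\,l^{(i)} = -\alpha\beta\, Tr^{(i)} + \alpha\, TATr^{(i)},
\]
so it lies in $\Span\{Tr^{(i)}, TATr^{(i)}\}$. In particular, setting $\alpha = \alpha_{opt}$ and $\beta = \beta_{opt}$ from Theorem~\ref{thm:psi_cv} produces a specific vector $u_{*} = \alpha_{opt}\,TATr^{(i)} - \alpha_{opt}\beta_{opt}\,Tr^{(i)}$ inside this two-dimensional subspace, for which Theorem~\ref{thm:psi_cv} guarantees
\[
\|r^{(i)} - A u_{*}\|_T \leq \rho_{opt}\, \|r^{(i)}\|_T = \frac{|ad| - |bc|}{|ad| + |bc|}\, \|r^{(i)}\|_T.
\]

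Next I would invoke the defining optimality~\eqref{eqn:pm2_opt_cond} of the PSD-like iteration: the coefficients $\alpha^{(i)}, \beta^{(i)}$ are chosen precisely so that $\|r^{(i+1)}\|_T$ is the minimum of $\|r^{(i)} - Au\|_T$ over all $u \in \Span\{Tr^{(i)}, TATr^{(i)}\}$. Since $u_{*}$ is one admissible candidate in this minimization, we get
\[
\|r^{(i+1)}\|_T \;\leq\; \|r^{(i)} - A u_{*}\|_T \;\leq\; \frac{|ad| - |bc|}{|ad| + |bc|}\, \|r^{(i)}\|_T,
\]
which is exactly the bound~\eqref{eqn:pm2_cv}.

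There is essentially no obstacle here; the only thing to be careful about is the correct identification of the two descent coefficients in~\eqref{eq:psd_indef} with the pair $(\alpha, -\alpha\beta)$ coming from~\eqref{eqn:psi_sid}, so that the stationary iteration's update is actually realizable as a particular choice in the variational formulation~\eqref{eqn:pm2_opt_cond}. Once this correspondence is made explicit, convergence of the PSD-like scheme, including the linear rate, is an immediate consequence of the optimality of the two-dimensional residual minimization combined with the worst-case bound already established in Theorem~\ref{thm:psi_cv}.
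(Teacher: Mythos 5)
Your proposal is correct and follows essentially the same route as the paper: apply the two-dimensional optimality~\eqref{eqn:pm2_opt_cond} with the particular candidate given by one step of the stationary iteration~\eqref{eqn:psi_sid} with $\alpha_{opt}$, $\beta_{opt}$ (taking care of the sign/coefficient correspondence $\tilde\beta = -\alpha_{opt}\beta_{opt}$, $\tilde\alpha = \alpha_{opt}$, exactly as you note), and then invoke the per-step bound of Theorem~\ref{thm:psi_cv}.
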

\begin{proof}
Since $\alpha^{(i)}$ and $\beta^{(i)}$ in~\eqref{eq:psd_indef}--\eqref{eqn:pm2_opt_cond} are 
such that $r^{(i+1)}$ has the smallest $T$-norm over  
$r^{(i)} + \mbox{span}\left\{ A w^{(i)}, A s^{(i)}\right\}$,
where $w^{(i)} = T r^{(i)}$ and $s^{(i)} = T A w^{(i)}$, 
\[
\|r^{(i+1)}\|_T  =  \|r^{(i)} - \beta^{(i)} Aw^{(i)} -  \alpha^{(i)} A s^{(i)}\|_T 
\leq   \|r^{(i)} - \tilde \beta A w^{(i)} - \tilde \alpha As^{(i)} \|_T, 
\]
for any $\tilde \alpha, \tilde \beta \in \mathbb{R}$.
The inequality holds for any $\tilde \alpha$ and $\tilde \beta$ and, therefore, is valid for the 
particular choice $\tilde \beta \equiv - \alpha_{opt} \beta_{opt}$ and $\tilde \alpha \equiv \alpha_{opt}$, 
where $\alpha_{opt} = 2/(| b | c + |a|d)$ and $\beta_{opt}=c - \left| b \right|$ 
are defined by~Theorem~\ref{thm:psi_cv}.
Thus,
%
\begin{equation}\label{eq:r_psd}
\|r^{(i+1)}\|_T  \leq 
\|r^{(i)} - \alpha_{opt} A l^{(i)} \|_T \equiv \|\tilde r^{(i+1)}\|_T,
\end{equation}
where $l^{(i)} = s^{(i)} - \beta_{opt} w^{(i)}$ and 
$\tilde r^{(i+1)} = r^{(i)} - \alpha_{opt} A l^{(i)}$ is
the residual after applying a step of stationary iteration~\eqref{eqn:psi_sid} with 
optimal parameters to the starting vector $x^{(i)}$. 
Then, by Theorem~\ref{thm:psi_cv}, $\|\tilde r^{(i+1)}\|_T \leq \rho_{opt} \|r^{(i)}\|_T$, 
with~$\rho_{opt}$ defined in~\eqref{eqn:opt_cv}, and the proof of the corollary follows from~\eqref{eq:r_psd}.
%
\end{proof}


If we define $\kappa = ad/bc$, then the stepwise convergence factor
in~\eqref{eqn:pm2_cv} can be written as $(\kappa - 1)/(\kappa+1)$. The 
PMINRES asymptotic convergence factor in~\eqref{eq:minres_cv} is then obtained by taking the square 
root of $\kappa$, which gives $(\sqrt{\kappa} - 1)/(\sqrt{\kappa}+1)$. This relation 
is similar to that between the PSD and PCG convergence factors for SPD systems, where $\kappa$ 
is, instead, given by the spectral condition number of $TA$. Hence,
method~\eqref{eq:psd_indef}--\eqref{eqn:pm2_opt_cond} can be viewed as a direct analogue
of PSD in the case of symmetric indefinite systems, where the preconditioner $T$ is SPD.

\subsection{The PSDI algorithm.}\label{subsec:alg}

We now describe a simple and efficient algorithm implementing
the PSD-like iteration~\eqref{eq:psd_indef}--\eqref{eqn:pm2_opt_cond}, whose 
convergence was established in the previous section.
%
Condition~\eqref{eqn:pm2_opt_cond} implies that the new residual 
$r^{(i+1)} = r^{(i)} - \beta^{(i)} A w^{(i)} - \alpha^{(i)} A s^{(i)}$ 
is $T$-orthogonal
to $\text{span}\{Aw^{(i)}, A s^{(i)}\}$, where $w^{(i)} = Tr^{(i)}$ and $s^{(i)} = TA w^{(i)}$.
Thus, at each step of method~\eqref{eq:psd_indef}, iteration parameters $\alpha^{(i)}$ 
and~$\beta^{(i)}$ 
can be determined by imposing the orthogonality constraints 
\[
(r^{(i+1)}, Aw^{(i)})_T = 0 \ \mbox{and} \ (r^{(i+1)}, A s^{(i)} )_T = 0, 
\]
which is equivalent to solving a 2-by-2 (least-squares) system
\begin{equation}\label{eq:ls_sys}
Z^* T Z \hat c = Z^* T r^{(i)}, 
\end{equation} 
where $Z = [A w^{(i)}, \; A s^{(i)}]$, and the solution is of the form
$\hat c = (\beta^{(i)} \; \alpha^{(i)})^T$. 
It is easy to check that, if $Z^* T Z$ is nonsingular,~\eqref{eq:ls_sys}
yields iteration parameters    
\begin{equation}\label{eq:it_param}
\beta^{(i)} = (\xi \nu - \mu \eta)/(\nu \mu - \eta^2), \quad \alpha^{(i)} = (\mu^2 - \xi \eta)/(\nu \mu - \eta^2),
\end{equation}
where $\xi = (w^{(i)}, Aw^{(i)})$,  $\nu = (As^{(i)},TAs^{(i)})$,  $\mu = (w^{(i)},As^{(i)})$,  and $\eta = (s^{(i)},As^{(i)})$. Moreover, since $\mbox{det} (Z^*TZ) = \nu \mu - \eta^2$, the nonsingularity of
$Z^*TZ$ guarantees that no division by zero is encountered in evaluating the expressions for 
$\alpha^{(i)}$ and $\beta^{(i)}$, and hence iteration parameters~\eqref{eq:it_param} are well-defined
in this case. Note that our definition of the iteration parameters through solution of a least-squares
problem is similar to that in the generalized conjugate gradient methods~\cite{Axelsson:80, Axelsson:87}.   

If $Z^*TZ$ is singular, then the PSD-like iteration~\eqref{eq:psd_indef}, with 
$\alpha^{(i)}$ and $\beta^{(i)}$ computed by~\eqref{eq:it_param}, breaks down due to division by zero. 
This, however, constitutes a ``happy'' break-down, which indicates that an exact solution can be 
obtained at the given step. Indeed, since $T$ is SPD, the matrix $Z^*TZ$ is singular if and only
if the columns $Aw^{(i)}$ and $As^{(i)}$ of $Z$ are linearly dependent. The latter 
implies, in particular, that $r^{(i)}$ and $ATr^{(i)}$ are collinear, in which case 
minimization~\eqref{eqn:pm2_opt_cond} yields a zero residual. The associated exact solution  
is given by $x_* = x^{(i)} + \beta^{(i)}Tr^{(i)}$, where $$\beta^{(i)} = 
(r^{(i)}, ATr^{(i)})_T/(ATr^{(i)},ATr^{(i)})_T = (w^{(i)}, A w^{(i)})/(w^{(i)}, As^{(i)}) \equiv \xi/\mu.$$

Thus, we have proved the following proposition. 

\begin{proposition}\label{prop:breakdown}
Iteration~\eqref{eq:psd_indef} with $\alpha^{(i)}$ and $\beta^{(i)}$ defined by~\eqref{eq:it_param} does not break down, provided that $w^{(i)} = Tr^{(i)}$ and $s^{(i)} = T A w^{(i)}$ are linearly independent.
If $w^{(i)}$ and $s^{(i)}$ are linearly dependent, then $x_* = x^{(i)} + \beta^{(i)} w^{(i)}$, 
where $\beta^{(i)} = (w^{(i)}, A w^{(i)})/(w^{(i)}, As^{(i)})$, is the exact solution of $Ax = f$. 
\end{proposition}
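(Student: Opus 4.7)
The plan is to work from the 2-by-2 normal equations $Z^{*}TZ\,\hat c = Z^{*}Tr^{(i)}$ with $Z = [Aw^{(i)},\,As^{(i)}]$, from which the formulas~\eqref{eq:it_param} were derived in the paragraph preceding the statement. The only possible cause of a break-down is division by $\nu\mu - \eta^{2} = \det(Z^{*}TZ)$. Since $T$ is SPD, this determinant is a Gram determinant of the columns of $Z$ in the $T$-inner product, so it vanishes if and only if those columns are linearly dependent; because $A$ is nonsingular, this is in turn equivalent to linear dependence of $w^{(i)}$ and $s^{(i)} = TAw^{(i)}$. Whenever $w^{(i)}$ and $s^{(i)}$ are linearly independent we therefore have $\nu\mu-\eta^{2}\neq 0$, and the iteration is well-defined; this proves the first assertion.

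For the second assertion, assume $w^{(i)}$ and $s^{(i)}$ are linearly dependent. The case $w^{(i)}=0$ is trivial, since then $r^{(i)} = T^{-1}w^{(i)} = 0$ and $x^{(i)}$ already solves $Ax=f$. Otherwise there is a scalar $\gamma$ with $TAw^{(i)} = s^{(i)} = \gamma w^{(i)}$; since $T$ and $A$ are invertible, $\gamma\neq 0$. Multiplying by $T^{-1}$ and using $w^{(i)} = Tr^{(i)}$ yields the key identity $Aw^{(i)} = \gamma r^{(i)}$, so the residual produced by a step of length $\beta$ along $w^{(i)}$ is $r^{(i)} - \beta Aw^{(i)} = (1-\beta\gamma)r^{(i)}$, which vanishes precisely when $\beta = 1/\gamma$. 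Hence $x_{*} = x^{(i)} + (1/\gamma)w^{(i)}$ is the exact solution.

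The final bookkeeping step is to match the stated formula $\beta^{(i)} = \xi/\mu$ against $1/\gamma$: using $Aw^{(i)} = \gamma r^{(i)}$ and $As^{(i)} = \gamma Aw^{(i)} = \gamma^{2}r^{(i)}$, together with $(w^{(i)},r^{(i)}) = (Tr^{(i)},r^{(i)}) = \|r^{(i)}\|_{T}^{2}>0$, one obtains $\xi = \gamma\|r^{(i)}\|_{T}^{2}$ and $\mu = \gamma^{2}\|r^{(i)}\|_{T}^{2}$, so indeed $\xi/\mu = 1/\gamma$. I do not anticipate any genuine obstacle in this proof: essentially everything has been laid out in the paragraphs preceding the proposition, and the only subtleties are the short computation just described and the separate handling of the degenerate sub-case $w^{(i)}=0$, in which the expression $\xi/\mu$ would formally be $0/0$ but is unneeded because convergence has already occurred.
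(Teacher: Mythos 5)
Your proof is correct and follows essentially the same route as the paper: nonsingularity of $Z^*TZ$ via the Gram-determinant argument in the $T$-inner product for the first claim, and collinearity of $r^{(i)}$ and $ATr^{(i)}$ with the verification that $\xi/\mu$ gives the exact solution for the second. Your explicit treatment of the degenerate case $w^{(i)}=0$ and the derivation of $\beta^{(i)}=1/\gamma$ are minor refinements of the paper's least-squares phrasing, not a different approach.
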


Algorithm~\ref{alg:psd1} summarizes an implementation of the PSD-like 
method~\eqref{eq:psd_indef}--\eqref{eqn:pm2_opt_cond}, which we further refer to as the PSDI algorithm. 

\begin{algorithm}[htbp]
\begin{small}
\begin{center}
  \begin{minipage}{5in}
\begin{tabular}{p{0.5in}p{4.5in}}
{\bf Input}:  &  \begin{minipage}[t]{4.0in}
The matrix $A = A^*$, a preconditioner $T = T^*>0$, the right-hand side $f$, and
                 the initial guess $x^{(0)}$;
                  \end{minipage} \\
{\bf Output}:  &  \begin{minipage}[t]{4.0in}
                 The approximate solution $x$;
                  \end{minipage}
\end{tabular}
\begin{algorithmic}[1]
\STATE $x \gets x^{(0)}$; $w \gets T(f - Ax$); 
\WHILE {convergence not reached}
  \STATE $l \gets A w$; $s \gets T l$; 
  \STATE $\xi \gets (w, l)$;
  \STATE $l \gets As$; $q \gets Tl$; 
  \STATE $\nu \gets (l,q)$; $\mu \gets (w,l)$; $\eta \gets (s,l)$;  
\IF{$\nu \mu - \eta^2 > 0$}
  \STATE $\beta \gets (\xi \nu - \mu \eta)/(\nu \mu - \eta^2)$; $\alpha \gets (\mu^2 - \xi \eta)/(\nu \mu - \eta^2)$; 
\ELSE
 \STATE $\beta \gets \xi/\eta$; $\alpha \gets 0$; \mbox{//exact solution found} 
\ENDIF 
  \STATE Update $x \gets x + \beta w + \alpha s$ and $w \gets w - \beta s - \alpha q$;
\ENDWHILE
\STATE Return $x$.
\end{algorithmic}
\end{minipage}
\end{center}
\end{small}
  \caption{A PSD-like scheme for symmetric Indefinite systems~(PSDI)}
  \label{alg:psd1}
\end{algorithm}

Each PSDI iteration performs two matrix-vector 
multiplications and two preconditioning operations. The computation of 
parameters $\alpha^{(i)}$ and $\beta^{(i)}$ requires total of 
four inner products. The number of stored vectors is equal to~five.

\subsection{PSDI vs PMINRES(2)}\label{subsec:psdi_aspects}

Algorithm~\ref{alg:psd1} is mathematically equivalent to PMINRES restarted 
after every two steps. Therefore, a possible implementation of
method~\eqref{eq:psd_indef}--\eqref{eqn:pm2_opt_cond} can be obtained by directly 
restarting any ``black box'' PMINRES solve. However, such an implementation, referred to as PMINRES(2), 
is not optimal as each restart will accrue an additional matrix-vector product and
preconditioning operation that take place at the setup phase 
to form an initial preconditioned residual vector.
By contrast, each PSDI iteration in Algorithm~\ref{alg:psd1} performs a minimal 
number of operations and gives a simple and efficient implementation 
of~\eqref{eq:psd_indef}--\eqref{eqn:pm2_opt_cond}.

\subsection{PSDI vs PMINRES}\label{subsec:psdi_pminres}
Clearly, the convergence of PSDI is generally slower than that
of PMINRES, as confirmed by bounds~\eqref{eq:minres_cv} and~\eqref{eqn:pm2_cv}.
However, in some specific situations, to~be illustrated by our numerical examples,
the reduction in computation and storage offered by PSDI (discussed below) can offset the benefit 
of a faster~convergence. 

Although PMINRES performs only one matrix-vector product and one preconditioning 
operation per step, according to~\eqref{eq:minres_cv}, 
it guarantees the residual norm reduction only after every two iterations.
Thus, both PSDI and PMINRES require two matrix-vector multiplications and
two preconditioning operations to ensure the decrease of the residual $T$-norm.
Similar to PSDI, PMINRES performs two inner products per matrix-vector multiplication,
so that the number of inner products needed for the residual reduction after 
two PMINRES steps is four.
However, PMINRES also requires an additional inner product at the setup phase prior
to the main loop; see, e.g.,~\cite[Chapter 8]{Greenbaum:97}. This extra 
work can potentially be sensible, e.g., if the total number of iterations is small 
or if the linear solve is repeatedly invoked for a sequence
of systems.       



More pronounced are memory savings. In contrast to only five vectors stored by PSDI,
a PMINRES implementation relies on at least eight vectors. Four of these vectors 
stem from the preconditioned Lanczos step, three are involved in the search direction
recurrence, and one is used to accommodate the approximate solution; 
see, e.g.,~\cite[Chapter 8]{Greenbaum:97}.     
Thus, the PSDI algorithm can be attractive in cases
where storage is limited or the memory accesses are costly.

Finally, note that if the residual $T$-norm (or the $2$-norm) 
is required to~assess the convergence, then Algorithm~\ref{alg:psd1}
should also store two additional vectors $r^{(i)} = f - Ax^{(i)}$ and $A w^{(i)}$, 
and at each iteration perform an extra inner product to evaluate the residual norm.  
However, such a residual 
norm evaluation is often unnecessary in practice, and a less expensive 
stopping rule can suffice. For~exa- mple, one can determine convergence
using the largest magnitude component of the preconditioned residual $w^{(i)}$,
which is readily available at PSDI iterations.

\subsection{PSDI vs existing schemes with comparable cost and storage}\label{subsec:psdi_other}

One may naturally wonder if PSDI provides any advantage over a number of existing 
schemes with comparable cost and storage, obtained by restarting or truncating earlier 
methods, such as preconditioned Orthomin and Orthodir~\cite{Young.Jea:80}.

As we explained in Section~\ref{sec:psi}, the preconditioned Orthomin(1) algorithm, equivalent 
to PMINRES restarted after every step, generally fails to converge when applied to symmetric 
indefinite systems with an SPD preconditioner. For $j>1$, the preconditioned Orthomin($j$),
as well as its restarted versions, are known to encounter a possible break-down, 
because zero is in the field of values of $TA$~\cite{Greenbaum:97}. By contrast, 
according to Theorem~\ref{thm:pm2_cv} and Proposition~\ref{prop:breakdown},
PSDI is guaranteed to converge and does not break down.

Note that the above discussion also applies to a somewhat less well know (preconditioned) 
Orthores algorithm~\cite{Young.Jea:80}. The latter is known to be algebraically equivalent to (preconditioned) Orthomin, 
converging if and only if Orthomin converges; see~\cite{Ashby.Gutknecht:93}. 

The situation is slightly different for the preconditioned Orthodir scheme, which is known to be 
break-down free. However, restarting preconditioned Orthodir at every step is equivalent to
preconditioned Orthomin(1) and, hence, fails to converge. Restarts after every two steps yield
an implementation that is mathematically equivalent to PSDI and PMINRES(2), but which is more
costly than both, requiring more (six versus four in PSDI) inner products per restart cycle. 
The convergence behavior of the the truncated versions, Orthodir($1$) and Orthodir($2$), is not clear.

\section{Residual minimization over a one-dimensional subspace.}\label{sec:prmm}

Let us now assume that we know the endpoints $b$ and $c$ of the intervals $\mathcal{I}$. In this case, one can fix a value 
$\beta \in (b,c)$, and consider the iterative scheme 
\begin{equation}\label{eqn:pm1}
x^{(i+1)}  =  x^{(i)} + \alpha^{(i)} l^{(i)}, \ \alpha^{(i)} = \frac{( w^{(i)}, A l^{(i)})}{(A l^{(i)},T A l^{(i)})},  \ i = 0,1,\ldots,
\end{equation} 
which updates the approximate solution by performing steps in the direction 
$l^{(i)}  =  s^{(i)} - \beta w^{(i)}$.
Here, the choice of $\alpha^{(i)}$ ensures that the new residual $r^{(i+1)} = r^{(i)} - \alpha A l^{(i)}$
has the smallest $T$-norm, i.e., 
$$\alpha^{(i)} = \underset{\alpha \in \mathbb{R}}{\operatorname{argmin}} \|r^{(i)} - \alpha A l^{(i)}\|_T.$$
%
%
The following corollary of Theorem~\ref{thm:psi_cv} guarantees that method~\eqref{eqn:pm1}
converges to the solution at a linear rate.

\begin{corollary}\label{thm:pm1_cv}
Method~\eqref{eqn:pm1} converges to the solution for any $\beta \in (b , c)$ with residuals 
satisfying 
\begin{equation}\label{eqn:pm1_rho}
\frac{\|r^{(i+1)}\|_T}{\|r^{(i)}\|_T} \leq \rho \equiv \rho_{opt}(\beta), 
\end{equation}
where $\rho_{opt}(\beta)$ is defined in~\eqref{eq:rho_beta}.
Moreover, if $\beta \equiv \beta_{opt} = c - \left| b \right|$, then~\eqref{eqn:pm2_cv}~holds. 
\end{corollary}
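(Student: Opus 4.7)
The plan is to exploit the fact that (\ref{eqn:pm1}) is a locally optimal variant of the stationary iteration (\ref{eqn:psi_sid}) analyzed in Theorem~\ref{thm:psi_cv}: both use the same search direction $l^{(i)} = s^{(i)} - \beta w^{(i)}$, but (\ref{eqn:pm1}) picks the step size $\alpha^{(i)}$ so as to minimize $\|r^{(i)} - \alpha A l^{(i)}\|_T$ over all $\alpha \in \mathbb{R}$, whereas (\ref{eqn:psi_sid}) uses a fixed $\alpha$. A one-line comparison argument then transfers the convergence bound from Theorem~\ref{thm:psi_cv} to Corollary~\ref{thm:pm1_cv}.

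First I would record the one-dimensional optimality: by the definition of $\alpha^{(i)}$,
\[
\|r^{(i+1)}\|_T \;=\; \min_{\alpha \in \mathbb{R}} \|r^{(i)} - \alpha A l^{(i)}\|_T \;\leq\; \|r^{(i)} - \tilde{\alpha} A l^{(i)}\|_T
\]
for every scalar $\tilde{\alpha}$. Next I would specialize $\tilde{\alpha}$ to the value $\alpha_{opt}(\beta)$ from (\ref{eqn:alpha_opt_beta}) in the proof of Theorem~\ref{thm:psi_cv}. Since $b < \beta < c$ by hypothesis and $0 < \alpha_{opt}(\beta) < \tau_{\beta}$, Theorem~\ref{thm:psi_cv} applies to the stationary iteration with parameters $(\alpha_{opt}(\beta), \beta)$ and yields
\[
\|r^{(i)} - \alpha_{opt}(\beta) A l^{(i)}\|_T \;\leq\; \rho_{opt}(\beta)\,\|r^{(i)}\|_T,
\]
with $\rho_{opt}(\beta)$ as in (\ref{eq:rho_beta}). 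Combining the two displayed inequalities proves (\ref{eqn:pm1_rho}).

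For the second assertion, I would observe that the optimization over $\beta \in (b,c)$ carried out at the end of the proof of Theorem~\ref{thm:psi_cv} showed that $\tilde{\kappa}(\beta)$, and hence $\rho_{opt}(\beta)$, is minimized at $\beta_{opt} = c - |b|$, where it attains the value $(|ad|-|bc|)/(|ad|+|bc|)$ given in (\ref{eqn:opt_cv}). Substituting this choice of $\beta$ into (\ref{eqn:pm1_rho}) therefore recovers exactly the bound (\ref{eqn:pm2_cv}) of Corollary~\ref{thm:pm2_cv}.

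There is essentially no obstacle here, since the result is a direct corollary: the only thing to notice is the standard local-versus-stationary comparison, and that the optimal stationary step size $\alpha_{opt}(\beta)$ is an admissible trial value in the one-dimensional minimization defining $\alpha^{(i)}$. The mild subtlety worth mentioning explicitly is that the bound (\ref{eqn:pm1_rho}) holds at \emph{every} step $i$ despite $\alpha^{(i)}$ varying with $i$, because the local minimization can only improve upon the uniform choice $\alpha_{opt}(\beta)$ used in Theorem~\ref{thm:psi_cv}.
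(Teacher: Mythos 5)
Your proposal is correct and follows essentially the same route as the paper: the local one-dimensional optimality of $\alpha^{(i)}$ is compared against the trial value $\tilde\alpha \equiv \alpha_{opt}(\beta)$ from~\eqref{eqn:alpha_opt_beta}, the stationary-iteration bound $\rho_{opt}(\beta)$ from the proof of Theorem~\ref{thm:psi_cv} is then invoked, and the second assertion follows by evaluating $\rho_{opt}(\beta)$ at $\beta_{opt} = c - |b|$. No gaps; this matches the paper's argument.
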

\begin{proof}
Since $\alpha^{(i)}$ in~\eqref{eqn:pm1} delivers the smallest residual $T$-norm, we have 
\begin{equation*}
\|r^{(i+1)}\|_T = \|r^{(i)} - \alpha^{(i)} Al^{(i)}\|_T \leq \|r^{(i)} - \tilde \alpha Al^{(i)}\|_T,
\end{equation*}
for any $\tilde \alpha \in \mathbb{R}$. Hence, the inequality also holds for 
$\tilde \alpha \equiv \alpha_{opt}(\beta)$, with $\alpha_{opt}(\beta)$ defined 
in~\eqref{eqn:alpha_opt_beta},~i.e.,
\begin{equation}\label{eq:r_beta}
\|r^{(i+1)}\|_T \leq \|r^{(i)} - \alpha_{opt}(\beta) Al^{(i)}\|_T \equiv \|\tilde r^{(i+1)}\|_T,
\end{equation}
where $\tilde r^{(i+1)} = r^{(i)} - \alpha_{opt}(\beta) A l^{(i)}$ is
the residual after applying a step of stationary iteration~\eqref{eqn:psi_sid} with 
a given $\beta \in (b,c)$ and $\alpha = \alpha_{opt}(\beta)$ to the starting vector $x^{(i)}$. 
Then, following the proof of Theorem~\ref{thm:psi_cv}, 
$\|\tilde r^{(i+1)}\|_T \leq \rho_{opt}(\beta) \|r^{(i)}\|_T$, 
with~$\rho_{opt}(\beta)$ defined in~\eqref{eq:rho_beta}, 
and bound~\eqref{eqn:pm1_rho} follows from~\eqref{eq:r_beta}.
%
%
Furthermore, by Theorem~\ref{thm:psi_cv}, if $\beta \equiv \beta_{opt} = c - |b|$ 
then $\rho_{opt}(\beta)$ turns into the optimal factor~\eqref{eqn:opt_cv} and, 
hence,~\eqref{eqn:pm2_cv} holds.   
\end{proof}

Corollary~\ref{thm:pm1_cv} suggests that the fastest convergence rate
of iteration~\eqref{eqn:pm1}, given by~\eqref{eqn:pm2_cv}, 
corresponds to $\beta = c - |b|$.  
Therefore, with this choice of~$\beta$, scheme~\eqref{eqn:pm1} can also 
be viewed as an analogue of PSD in the symmetric indefinite case.

In contrast to~\eqref{eq:psd_indef}--\eqref{eqn:pm2_opt_cond}, 
the minimization in~\eqref{eqn:pm1} is performed only over a~one-dimensional subspace. 
However,  
in order to apply the scheme, one has to come up with 
reasonable estimates for the ``inner'' endpoints $b$ and $c$.
%
For example, a trivial estimate is given by $b = c = 0$, 
which turns the method into the well known preconditioned residual norm steepest descent 
scheme~\cite{Saad:03}, 
but determining $b$ and $c$ that constitute better approximations 
to the eigenvalues $\lambda_{p}$ and $\lambda_{p+1}$ of $TA$ 
can lead to a faster convergence.   

Generally, information about the spectrum of the preconditioned matrix $TA$ is not
easy to obtain. Nevertheless, for certain problems, 
such information can be available through theoretical analysis~\cite{Silvester.Wathen:94, Wathen.Silvester:93}. 
Alternatively, one can attempt to determine the fixed 
iteration parameter empirically by trying different small values of $\beta$.
Finally, estimates on $b$ and $c$ can be obtained by applying several steps of an interior
eigenvalue solver (e.g.,~\cite{Fokkema:Sleijpen.Vorst:98, Ve.Yang.Xue:15}) 
to find a few eigenvalues of $TA$ near zero. 
For example, if a sequence of systems with the same matrix is solved, then 
such eigenvalue calculations can be performed only once during preprocessing and their 
relative cost in the overall computation can be negligible.

\subsection{The PSDI-1D algorithm.}

An implementation of method~\eqref{eqn:pm1} is given in Algorithm~\ref{alg:psd2}, which we call
the PSDI-1D algorithm.  

\begin{algorithm}[!htbp]
\begin{small}
\begin{center}
  \begin{minipage}{5in}
\begin{tabular}{p{0.5in}p{4.5in}}
{\bf Input}:  &  \begin{minipage}[t]{4.0in}
The matrix $A = A^*$, a preconditioner $T = T^*>0$, the right-hand side $f$, a parameter $b < \beta < c$, and
                 the initial guess $x^{(0)}$;
                  \end{minipage} \\
{\bf Output}:  &  \begin{minipage}[t]{4.0in}
                 The approximate solution $x$;
                  \end{minipage}
\end{tabular}
\begin{algorithmic}[1]
\STATE $x \gets x^{(0)}$; $w \gets T(f - Ax)$; 
\WHILE {convergence not reached}
  \STATE $s \gets TA w$;
  \STATE $l \gets s - \beta w$;
  \STATE $s \gets Al$; $q \gets T s$; 
  \STATE $\alpha \gets (w, s)/(s, q)$; 
  \STATE Update $x \gets x + \alpha l$ and $w \gets w - \alpha q$;
\ENDWHILE
\STATE Return $x$.
\end{algorithmic}
\end{minipage}
\end{center}
\end{small}
  \caption{A PSD-like scheme for symmetric Indefinite systems 
  with residual minimization over a 1D subspace (PSDI-1D)}
  \label{alg:psd2}
\end{algorithm}

Similar to Algorithm~\ref{alg:psd1}, each PSDI-1D iteration requires two
matrix-vector multiplications and two preconditioning operations. 
At the same time, due to the available information about the 
spectrum,
Algorithm~\ref{alg:psd2} brings the number of inner products per iteration down to two
(one per matrix-vector product), which is two times less than in PMINRES.
The number of stored vectors is five, as in Algorithm~\ref{alg:psd1}. 

\begin{table}[!htbp]
\centering
{\small
\begin{tabular}{llll}
 & PMINRES & PSDI & PSDI-1D\tabularnewline
\hline 
MatVecs/Precs & 2 & 2 & 2\tabularnewline
Inner products & 4 (+1) & 4 & 2\tabularnewline
Storage ($\#$ of vec.) & 8 & 5 & 5\tabularnewline
\end{tabular}
}
\caption{
{\small Computational and storage expenses 
of different algorithms to ensure reduction of the residual $T$-norm;
``(+1)'' denotes 
an additional inner product at the setup phase prior to the main loop.}}
\label{tbl:complexity}
\end{table}

Table~\ref{tbl:complexity} summarizes the computational and storage expenses 
of different algorithms to ensure reduction of the residual $T$-norm.     
It shows that, while generally exhibiting a slower convergence, 
the PSD-like methods need fewer inner products and storage to reduce the residual.
Therefore, if used in a proper context, the algorithms can be of practical interest
for obtaining the best performance.     

\subsection{Randomization of the search direction.}

It is common in practice that Algorithm~\ref{alg:psd2} (as well as Algorithm~\ref{alg:psd1}) rapidly reduces the residual $T$-norm
at a few initial iterations and then stabilizes with a slower convergence rate, resembling
the worst-case behavior given by bound~\eqref{eqn:pm1_rho} or, if $\beta = c - |b|$, 
by~\eqref{eqn:pm2_cv}. A possible way to break this scenario, and hence speed up the convergence,
is to exploit the freedom on the choice of $\beta \in (b,c)$ by randomly varying 
the parameter in the course of iterations. As we explain below, and demonstrate in the numerical
examples of the next section, this simple randomization of $\beta$, and therefore
of the search direction $l^{(i)} = s^{(i)} - \beta w^{(i)}$, can lead to a substantial acceleration
of the method's convergence.

At each step of method~\eqref{eqn:pm1}, the error transformation can be written as 
\begin{equation}\label{eq:err}
e^{(i+1)} = (I - \alpha^{(i)} S_{\beta})e^{(i)}, \quad S_{\beta} = (TA - \beta I)TA,
\end{equation}
which corresponds to a step of the power method with respect to the transition 
matrix $I - \alpha^{(i)} S_{\beta}$.
This step emphasizes the error component in the direction of the eigenvector associated with the 
largest, in the absolute value, eigenvalue of $I - \alpha^{(i)} S_{\beta}$.
Since the choice $\beta \in (b,c)$ ensures that all eigenvalues of $S_{\beta}$ are positive, 
regardless of $\alpha^{(i)}$, the largest modulus eigenvalue of the transition matrix
is given either by $1 - \alpha^{(i)}\mu_{\min}$ or by $1 - \alpha^{(i)}\mu_{\max}$, where
$\mu_{\min}$ and $\mu_{\max}$ are the smallest and largest eigenvalues of $S_\beta$, 
with the corresponding eigenvectors $v_{\min}$ and $v_{\max}$.
%

Thus, after repeatedly performing transformation~\eqref{eq:err}, the error will be dominated by components in the direction
of either $v_{\min}$ or $v_{\max}$, or a combination of the two. Hence, a potentially slow convergence of~\eqref{eqn:pm1}
can be attributed to the difficulty in damping these two components of the error. 

Since the eigenvalues of $S_{\beta}$ are obtained from those of $TA$ via the quadratic transformation, 
$\mu_{\min} = \min_{\lambda \in \{\lambda_p, \lambda_{p+1}\} } (\lambda^2 - \beta \lambda)$ and 
$\mu_{\max} = \max_{\lambda \in \{\lambda_1, \lambda_{n}\} } (\lambda^2 - \beta \lambda)$. 
Therefore, depending on the choice of $\beta$, $v_{\min}$ is given by $v_p$ or $v_{p+1}$, 
and $v_{\max}$ corresponds to $v_1$ or $v_n$, where $v_1$, $v_{p}$, $v_{p+1}$, and $v_n$ are 
the eigenvectors of $TA$ associated with the eigenvalues $\lambda_1$, $\lambda_{p}$, $\lambda_{p+1}$, and $\lambda_n$,
respectively.

Now, without loss of generality, suppose that the parameter $\beta$ yields $\mu_{\min} = \lambda_{p}^2 - \beta \lambda_{p}$
and $\mu_{\max} = \lambda_1^2 - \beta \lambda_1$, so that after a number of steps the error is dominated by the eigenvectors 
$v_{\min} = v_{p}$ or/and $v_{\max} = v_1$. At this point, 
let us assume that we can alter the parameter $\beta$ in such a way that $\mu_{\min}$ and $\mu_{\max}$ change to 
$\lambda_{p+1}^2 - \beta \lambda_{p+1}$ and $\lambda_{n}^2 - \beta \lambda_{n}$, respectively. 
(The change of $\mu_{\min}$ can always be achieved by modifying $\beta$, whereas the change of $\mu_{\max}$ depends on the 
location of the $TA$'s spectrum.) 
As a result, after the update of $\beta$, the error transformation~\eqref{eq:err} will emphasize the components in the direction of 
$v_{p+1}$ or $v_n$, and efficiently reduce the components in the problematic directions $v_{p}$ and $v_{1}$ that have
been dominant in the error's representation. Thus, even though the optimal convergence rate is given by $\beta = c - |b|$,
varying $\beta$ can potentially improve the convergence through
the implicit damping of the slowly vanishing error components.


   
A simple approach to systematically vary $\beta$ is to randomly generate a value from the interval 
$(b,c)$ at every iteration, i.e., set $\beta \equiv \beta_{\xi} = b + (c-b)\xi$, where $\xi$
is a random variable uniformly distributed in $(0,1)$.
Clearly, in this case, the optimal bound~\eqref{eqn:pm2_cv} no longer holds, 
however, the stepwise decrease of the residual norm is guaranteed by Corollary~\ref{thm:pm1_cv}. 
Although this reduction can be very small at certain iterations, overall,
the randomization of $\beta$ can lead to a noticeably faster convergence, as demonstrated in our examples 
of the next~section.   

\section{Examples}\label{sec:num}

In this section, we demonstrate the convergence behavior of the introduced schemes on 
several simple examples 
that admit SPD preconditioning. 
Our goal is two-fold. First, we would like to
%
illustrate the convergence bound~\eqref{eqn:pm2_cv}, as
well as show the impact of the simple randomization strategy of Algorithm~\ref{alg:psd2} 
on the convergence rate.
Second, we outline  
situations where using the PSD-like methods can represent a reasonable alternative 
to applying the optimal PMINRES.
As we shall see, such situations can occur in the cases where 
only a few iterations are needed to approximate the solution to the desired accuracy, e.g.,
due to a good initial guess or high preconditioning quality. 

\paragraph{Example 1}
In our first example, we consider a symmetric indefinite system coming from a 
discretization of the boundary value problem 
\begin{equation}\label{eqn:helmholtz_bvp}
-\Delta u (\mathrm x, \mathrm y) - \sigma u(\mathrm x,\mathrm y)  =  f(\mathrm x,\mathrm y), \  (\mathrm x,\mathrm y) \in \Omega  = (0,1)\times(0,1), \ u|_\Gamma = 0,    
\end{equation} 
where $\displaystyle \Delta = \partial^2/\partial \mathrm x ^2 + \partial^2 /\partial \mathrm y^2$ is the Laplace operator 
and $\Gamma$ denotes the boundary of the domain $\Omega$, given by a unit square. 
This problem is the Helmholtz equation with Dirichlet boundary conditions, where $\sigma > 0$ is a wave number.

Discretization of~\eqref{eqn:helmholtz_bvp}, using the standard
5-point finite difference stencil, results in a linear system $(L - \sigma I)x = f$, 
where $L$ represents the discrete Laplacian.
Since $L$ is SPD, the choice of a sufficiently large $\sigma$ introduces negative eigenvalues into the shifted problem, making the matrix
$L - \sigma I$ indefinite. If the degree of indefiniteness is not too high, a simple option to define an SPD preconditioner 
for $(L - \sigma I)x = f$ is given by $T = L^{-1}$~\cite{Bayliss.Goldstein.Turkel:83}. 
Below, we use such $T$ as an SPD preconditioner 
for the PSD-like schemes and the PMINRES algorithm.
The right-hand side $f$ and the initial guess $x^{(0)}$ are randomly chosen.

\begin{figure}[h] 
 \begin{center} 	 
 \includegraphics[width = 6cm]{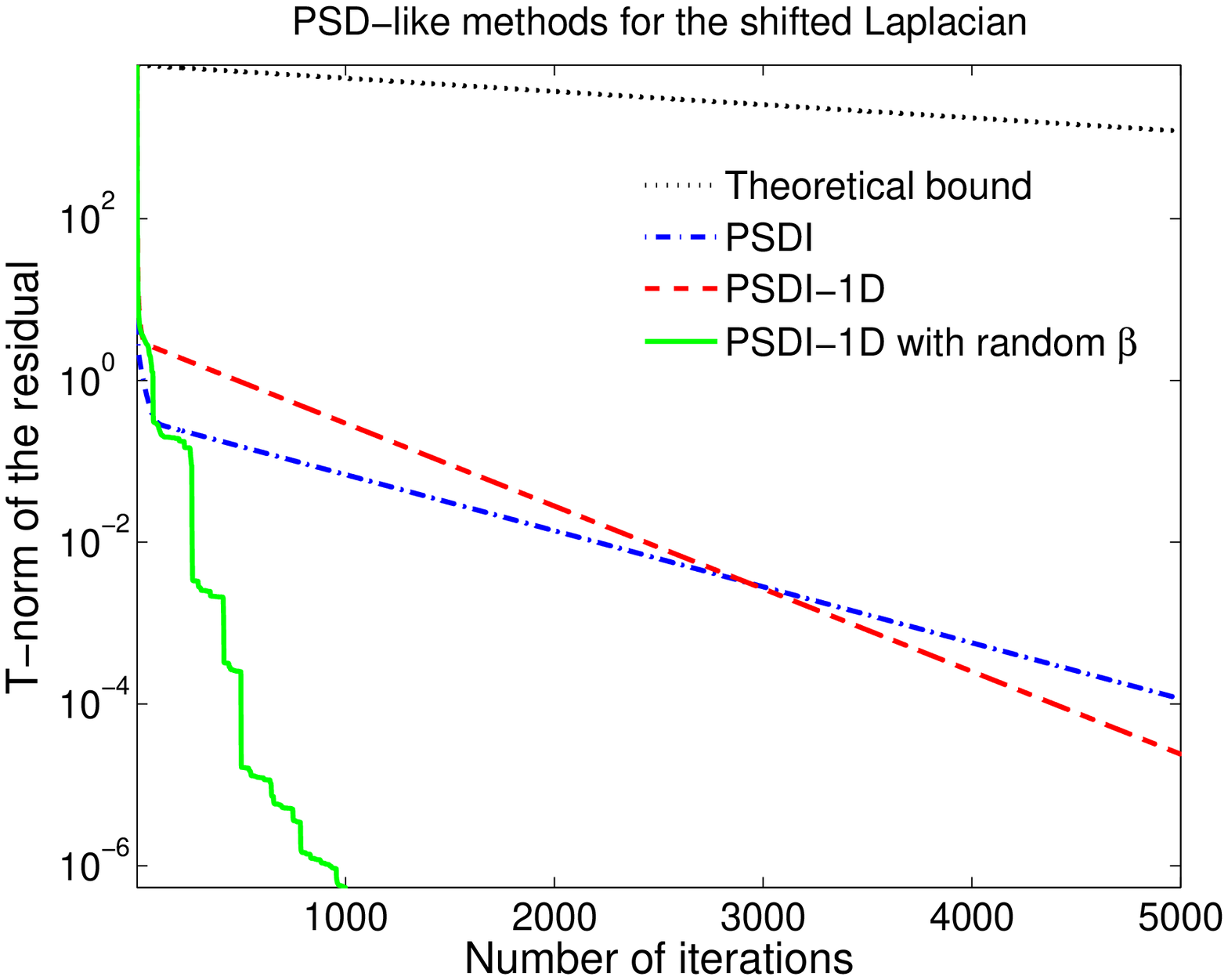} 
 \includegraphics[width = 6cm]{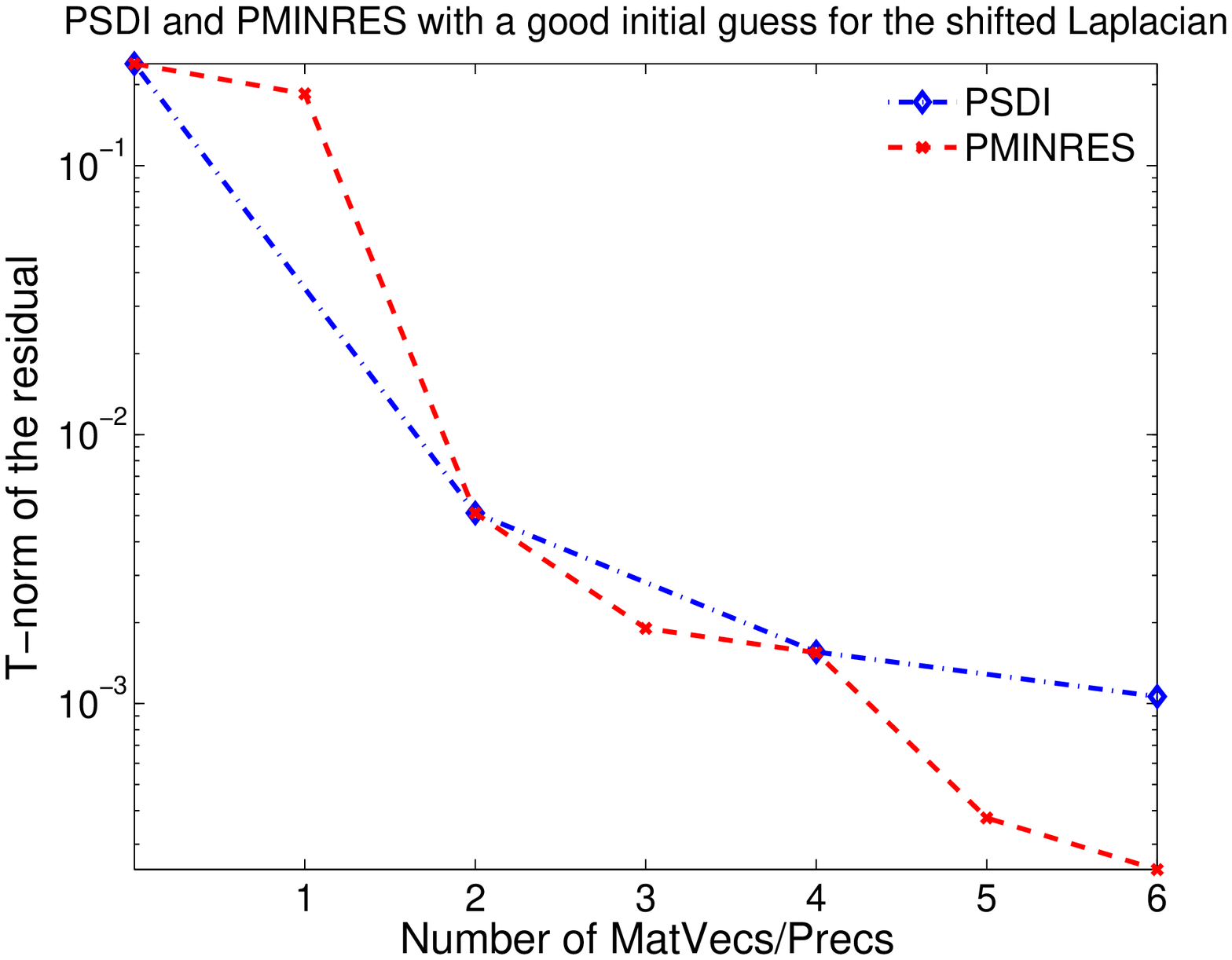} 
 \end{center} 
 \caption{\small{Convergence of different solution schemes for the shifted Laplacian system $(L - \sigma I)x = f$ 
with $\sigma = 100$ and $T = L^{-1}$; $n = 3,969$. \textit{Left:} Comparison of the PSD-like schemes.
\textit{Right:} Comparison of PSDI and PMINRES for a good initial guess.}}\label{fig:lapl} 
\end{figure}

In particular, we let $\sigma = 100$ and consider the shifted Laplacian problem of size $n = 3,969$.     
Then, if $T = L^{-1}$, the preconditioned matrix $T(L - \sigma I)$ has 6 negative eigenvalues, with
$\lambda_1 \approx -4.0671$, $\lambda_6 \approx - 0.0149$, $\lambda_7 \approx 0.2194$, 
and $\lambda_n = 0.9939$.
Thus, the interval $\mathcal{I} = [a,b]\cup[c,d]$, containing the spectrum of $T(L - \sigma I)$, 
can be defined by $a = \lambda_1$, $b = \lambda_6$, $c = \lambda_7$, and $d = 4.2716$, 
where the choice of~$d$ ensures that $[a,b]$ and $[d,c]$ are of the same length. 
This information allows us to calculate convergence
bound~\eqref{eqn:pm2_cv} and set the parameter $\beta$ in Algorithm~\ref{alg:psd2} to the optimal 
value $c - |b|$.
The generation of $\beta$ in the randomized version of Algorithm~\ref{alg:psd2} is performed 
with respect to the interval~$(b, c)$. 

We note that the question of constructing efficient SPD preconditioners for Helmholtz problems 
is not in the scope of this paper, and the choice $T = L^{-1}$ is motivated mainly by simplicity 
of presentation, allowing to keep focus on the presented PSDI iterative scheme rather than on 
preconditioning issues. A stronger SPD preconditioner for this model problem can be found in~\cite{thesis,Ve.Kn:13}. 


The convergence of the PSD-like schemes is demonstrated in Figure~\ref{fig:lapl} (left). 
The figure shows that bound~\eqref{eqn:pm2_cv} is descriptive.  
It reflects well the convergence rate of PSDI and (non-randomized) PSDI-1D throughout the whole run, 
except for a few initial steps where the residual norms are reduced faster in practice.  
Note that PSDI-1D has a slightly faster convergence than PSDI, which demonstrates
that minimizing the residual over a 1D subspace does not necessarily yield a 
slower convergence compared to the 2D minimization of Algorithm~\ref{alg:psd2}. 
%
We also observe a significant acceleration of the convergence if a random~$\beta$ 
is used within PSDI-1D.
Remarkably, the speedup appears at no additional cost and is a consequence solely of the ``chaotic'' choice of the descent direction.   

Next, we consider a specific setting, where the initial guess
is already close to the solution and only low to moderate accuracy of the targeted approximation is 
wanted. In this case, if the preconditioning quality is sufficiently high, only a few steps of a
linear solver should be performed. 

The convergence of PSDI and PMINRES for such a situation is compared in Figure~\ref{fig:lapl} (right). 
Namely, we compute the exact solution of $(L - \sigma I) x = f$ and perturb it using a
random vector with small entries distributed uniformly on $[0,10^{-4}]$. We then apply three 
steps of PSDI and six steps of PMINRES and track the reduction of the residual $T$-norm at the
few initial iterations.
%
Since each PSDI iteration requires twice as many matrix-vector products and 
preconditioner applications compared to the PMINRES step, instead of the iteration count,
we show the convergence rate with respect to the number of matrix-vector multiplications
(MatVecs) or preconditioning operations (Precs). 

Figure~\ref{fig:lapl} (right) shows that both algorithms require the same number
of MatVecs/Precs to achieve the reduction of the residual $T$-norm by two orders 
of magnitude, i.e., the residual $T$-norms after two PSDI steps and four PMINRES steps
are identical. At the same time, as has been previously discussed, 
PSDI performs slightly less inner products and requires less memory. 
Hence, in the given context, if the goal is to improve the solution accuracy by only 
a few orders of magnitude, PSDI can be used as an alternative to PMINRES. 

However, if higher accuracies are wanted, which requires additional iterations, 
then PMINRES, as an optimal method, is clearly more suitable. For example,
as seen in Figure~\ref{fig:lapl} (right), its convergence becomes noticeably 
faster then that of PSDI starting from the fifth iteration. 
Note that the convergence of PSDI-1D
at the initial steps,
with both optimal and random choice of $\beta$, 
was not as rapid compared to PSDI and PMINRES. Therefore, we do not report the corresponding 
runs in the figure.

\paragraph{Example 2}
Our second example concerns a saddle point system, arising in the context of PDE-constrained optimization. 
Here, the solution of the optimal control problem
\[
\min_{u,f}\frac{1}{2}\|u - \hat u\|^2 + \tau \|f\|^2, 
\]
with the constraint that 
\[
-\Delta u  =  f \ \mbox{in} \ \Omega, \ u|_\Gamma = g,
\]
results, after the finite element discretization, in the symmetric indefinite system with the matrix
\begin{equation}\label{eq:saddle}
A = \left[
\begin{array}{rrr}
2 \tau M &  0  & -M \\
0        &  M  & K \\
-M       &  K  & 0
\end{array}
\right],
\end{equation}
where $K$ and $M$ are the SPD stiffness and mass matrices, respectively; 
see, e.g.,~\cite{Rees.Dollar.Wathen:09}. 
In particular, we choose $\Omega = (0,1)\times(0,1)$, $\tau = 10^{-2}$, 
$\hat u =  (2 \mathrm x -1)^2 (2 \mathrm y - 1)^2$ over 
$(0,\frac{1}{2})\times(0,\frac{1}{2})$ and 0 elsewhere, and use $Q_1$ finite elements to obtain the saddle point linear system of size $n = 2,883$. 
Exactly the same example was considered by Wathen and Rees~\cite{Wathen.Rees:09}, 
whereto we refer the reader for more details.

An efficient SPD preconditioner for~\eqref{eq:saddle}, proposed in~\cite{Rees.Dollar.Wathen:09}, has a block-diagonal form,
and is given by 
\begin{equation}\label{eq:saddle_T}
T = \left[
\begin{array}{rrr}
\frac{1}{2\tau} \tilde M^{-1} &  0  & 0 \\
0        &  \tilde M^{-1}  & 0 \\
0       &  0  & \tilde K^{*-1} M \tilde K^{-1}
\end{array}
\right],
\end{equation}
where $\tilde K$ and $\tilde M$ are approximations to $K$ and $M$, respectively.
In our test, we approximate $\tilde K$ and $\tilde M$ using incomplete Cholesky factorization 
with drop tolerance $10^{-3}$, so that $\tilde K^{-1}$ and $\tilde M^{-1}$ 
correspond to the successive triangular solves with the respective incomplete 
Cholesky factors. 
In this case, the spectrum of the preconditioned matrix $TA$ is enclosed into
the pair of equal-sized intervals $[-1.0108, -0.3096]$ and $[1; 1.7012]$.

\begin{figure}[h] 
 \begin{center} 	 
 \includegraphics[width = 6cm]{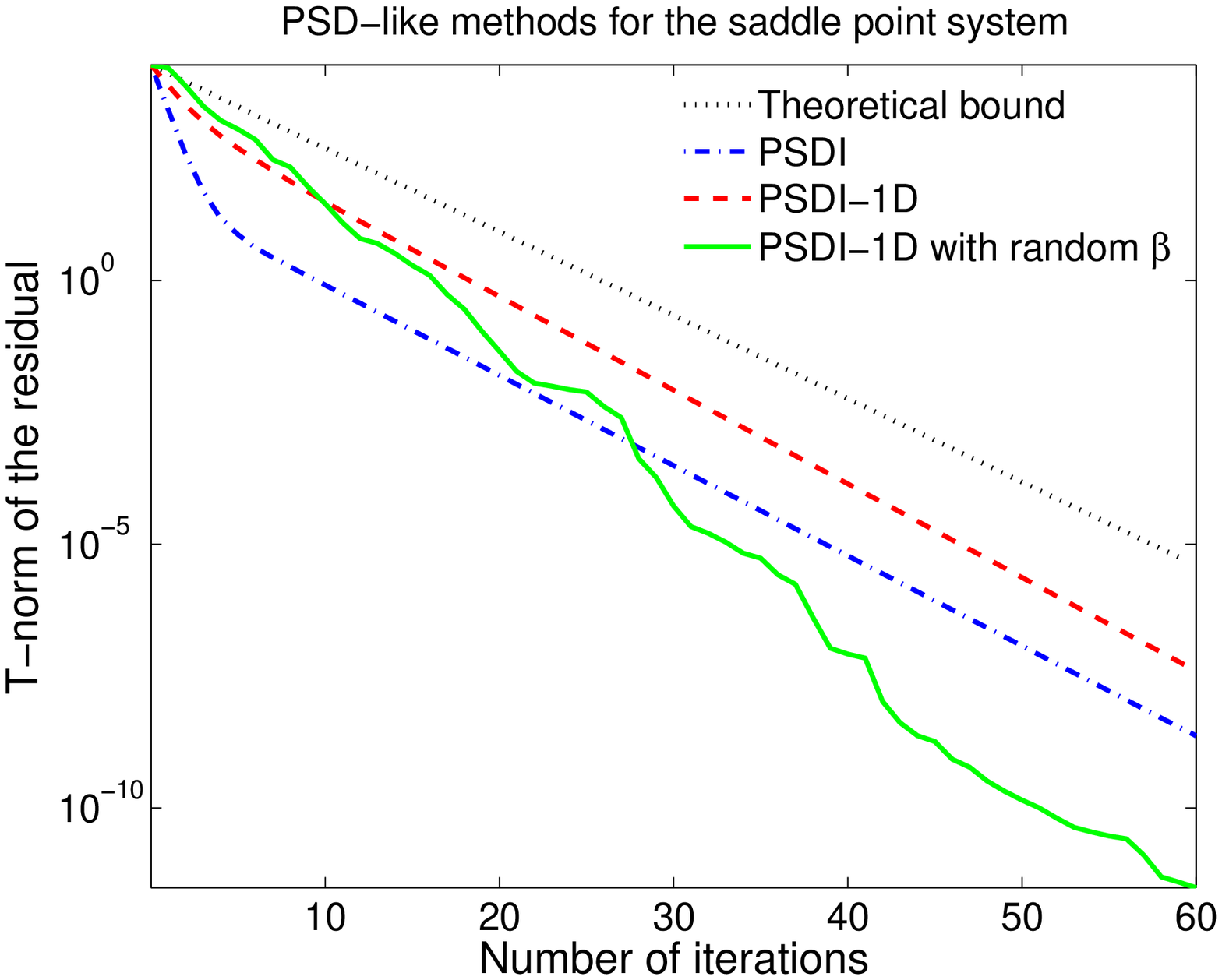} 
 \includegraphics[width = 6cm]{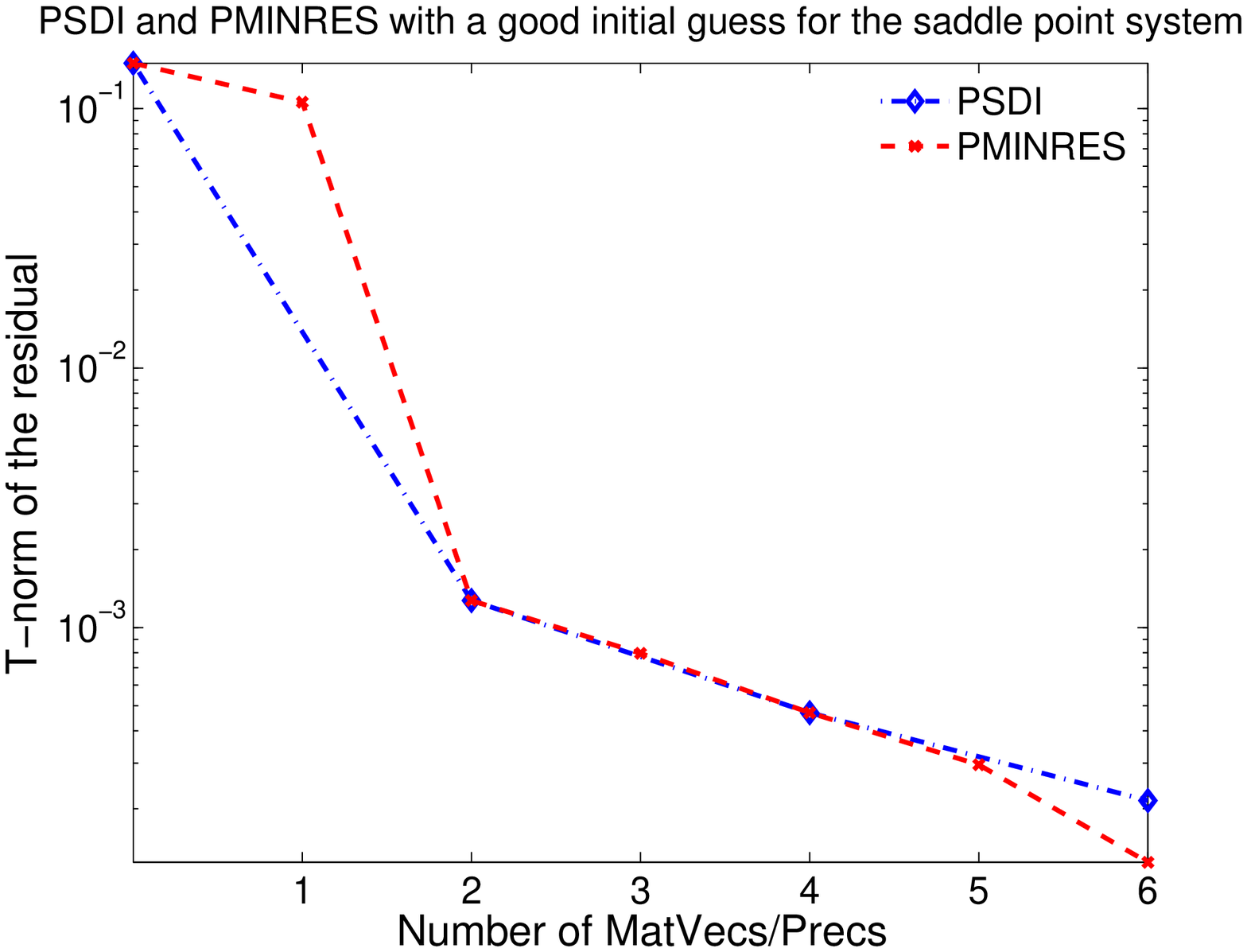} 
 \end{center} 
\caption{\small{Convergence of different solution schemes for a saddle point system with $A$
in~\eqref{eq:saddle} and $T$ in~\eqref{eq:saddle_T}; $n = 2,883$. 
\textit{Left:} Comparison of the PSD-like schemes.
\textit{Right:} Comparison of PSDI and PMINRES for a good initial guess.}}
\label{fig:saddle} 
\end{figure}

In Figure~\ref{fig:saddle} (left), we demonstrate the runs of the 
PSD-like methods for system~\eqref{eq:saddle},
with randomly chosen right-hand side and initial guess vectors. 
The parameter $\beta$ in 
PSDI-1D is set to the optimal value $c - |b|$, and the sampling of $\beta$ in the randomized version 
is performed over the interval $(b,c)$. 
%
As in the the previous example, it can be seen that bound~\eqref{eqn:pm2_cv} captures well
the actual convergence of PSDI and PSDI-1D, and the convergence rates of both schemes are
comparable in practice. The suggested randomization strategy, again, speeds up the convergence for PSDI-1D.  

Let us note that the convergence of the randomized PSDI-1D depends on the way random values $\beta$
are generated. In particular, using inappropriate probability distributions can slow down the 
convergence. On the contrary, one can expect to accelerate the method by suitably 
defining probability distribution.

\begin{figure}[h] 
 \begin{center} 	 
 \includegraphics[width = 6cm]{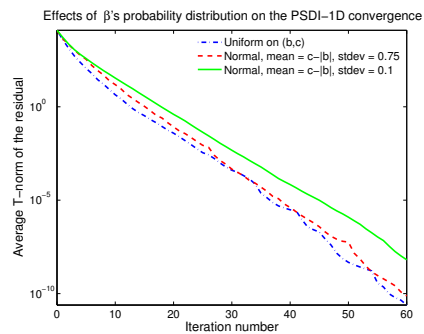} 
 \end{center} 
\caption{\small{Convergence of PSDI-1D with values of $\beta$ drawn from different distributions
for a saddle point system with $A$ in~\eqref{eq:saddle} and $T$ in~\eqref{eq:saddle_T}; $n = 2,883$.} 
}
\label{fig:distr} 
\end{figure}

This point is demonstrated in Figure~\ref{fig:distr}, which 
compares convergence of~PSDI-1D for values of $\beta$ drawn from different distributions.    
In the figure, we plot averaged (after 100 runs) residual norms produced by PSDI-1D,
where $\beta$ is either uniformly distributed on $(b,c)$ (as before), or drawn  
from the normal distribution with mean at the optimal value $c-|b|$ and~standard~deviations~$0.1$~and~$0.75$. 

One can see that a slower convergence is obtained if $\beta$ is normally distributed  
with standard deviation $0.1$, in which case the method closer resembles the deterministic version with
the optimal $\beta$. 
At the same time, increasing standard deviation to $0.75$ removes this effect, 
resulting in the convergence comparable to the case with the uniform distribution.

Finally, Figure~\ref{fig:saddle} (right) compares PSDI and the optimal PMINRES for the case where a good
initial guess is available and both methods perform only a few iterations to reduce the residual
$T$-norm by several orders of magnitude.  
Similar to the previous example, we define the initial guess by 
perturbing the exact solution with a
random vector whose entries are uniformly distributed on $[0,10^{-4}]$. 
The figure demonstrates that, at the initial iterations, the convergence of 
PSDI is comparable to that of PMINRES. However, PSDI requires less computations
and memory, and hence can be preferable to PMINRES in this type of situation.   

\paragraph{Example 3}
Another context which gives rise to symmetric indefinite systems 
is related to the interior eigenvalue calculations using inexact shift-and-invert, 
or preconditioned, eigenvalue solvers, e.g.,~\cite{Morgan:91, Ve.Yang.Xue:15}. 
In this setting, one seeks to compute an eigenpair 
$(\lambda,v)$ of a matrix $A$ that is closest to a given target $\sigma$.
At each iteration, such eigenvalue solvers require an approximate solution of the linear 
system of the form $(A - \sigma I) w = r$, where $r$ is the eigenresidual.

If a good preconditioner $T \approx (A - \sigma I)^{-1}$ is at hand, then $w$ can be defined
as $Tr$. However, in certain cases, the quality of $T$ is insufficient to ensure
a robust convergence . In this situation, instead, one can run 
several steps of an iterative linear solver applied to the symmetric indefinite system
$(A - \sigma I) w = r$ with  $T$ as a preconditioner, 
and set $w$ to the resulting approximate solution. 
In particular,
if $T$ is SPD, then the approximate solution of $(A - \sigma I) w = r$ can be 
computed either using PMINRES or one of the 
PSD-like methods introduced in this work. 

Let us consider a matrix $A$ coming from the plane wave discretization
of the Hamiltonian operator for the Si2H4 molecule ($n = 949$) 
in the framework of the Kohn-Sham Density Functional theory,
generated using the KSSOLV package~\cite{kssolv:09}. We would like to find an eigenpair
corresponding to the eigenvalue closest to the energy shift $\sigma=0.2$ using
the Davidson method with the harmonic Rayleigh--Ritz projection~\cite{Morgan:91}.
The given target $\sigma$ points to the $8$th eigenpair of $A$
associated with $\lambda = 0.1966$. 
Note that $A$ is complex Hermitian in this test,  
for which case all the results of this paper straightforwardly apply, 
though stated for the real symmetric matrices. The initial guess for the 
eigensolver is fixed to the first column of the identity matrix; the PSDI and 
PMINRES iterations start with the zero initial guess. 

A traditional choice of $T$ for this type of computation is the 
Teter--Payne--Allan preconditioner~\cite{Teter.Payne.Allan:89}, which is given by an SPD diagonal 
matrix. However, a direct use of $T$ to define the Davidson's expansion vectors 
$Tr$ may not provide a reliable convergence.
In particular, this is the case in our example, where the method 
converges to a wrong eigenpair. Therefore, in order to restore the convergence, 
as a preconditioner for the Davidson method, we use several steps of PMINRES and PSDI 
applied to $(A - \sigma I) w = r$, with $T$ being a preconditioner for the linear solve.

\begin{figure}[h] 
 \begin{center} 	 
 \includegraphics[width = 6cm]{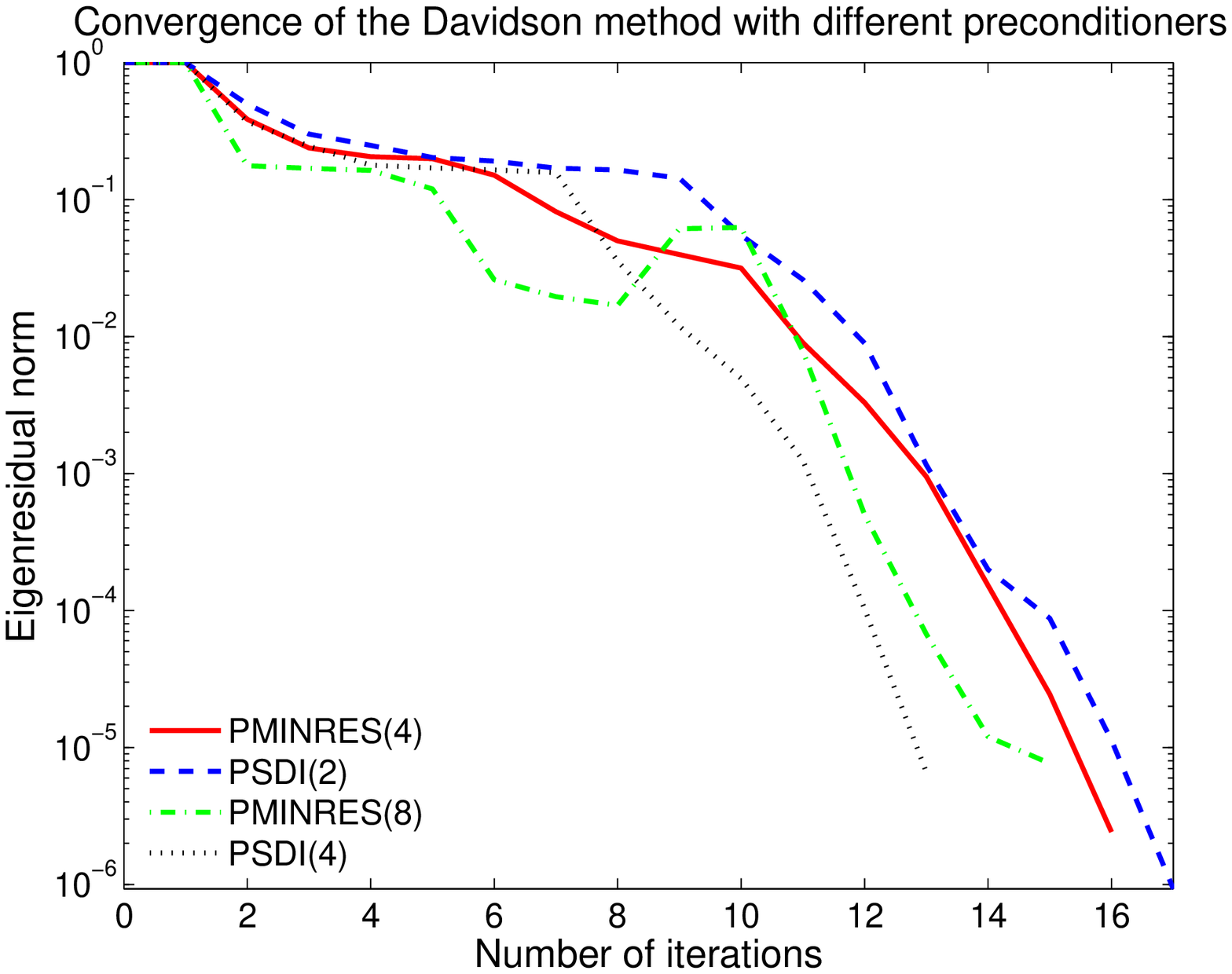} 
 \includegraphics[width = 6cm]{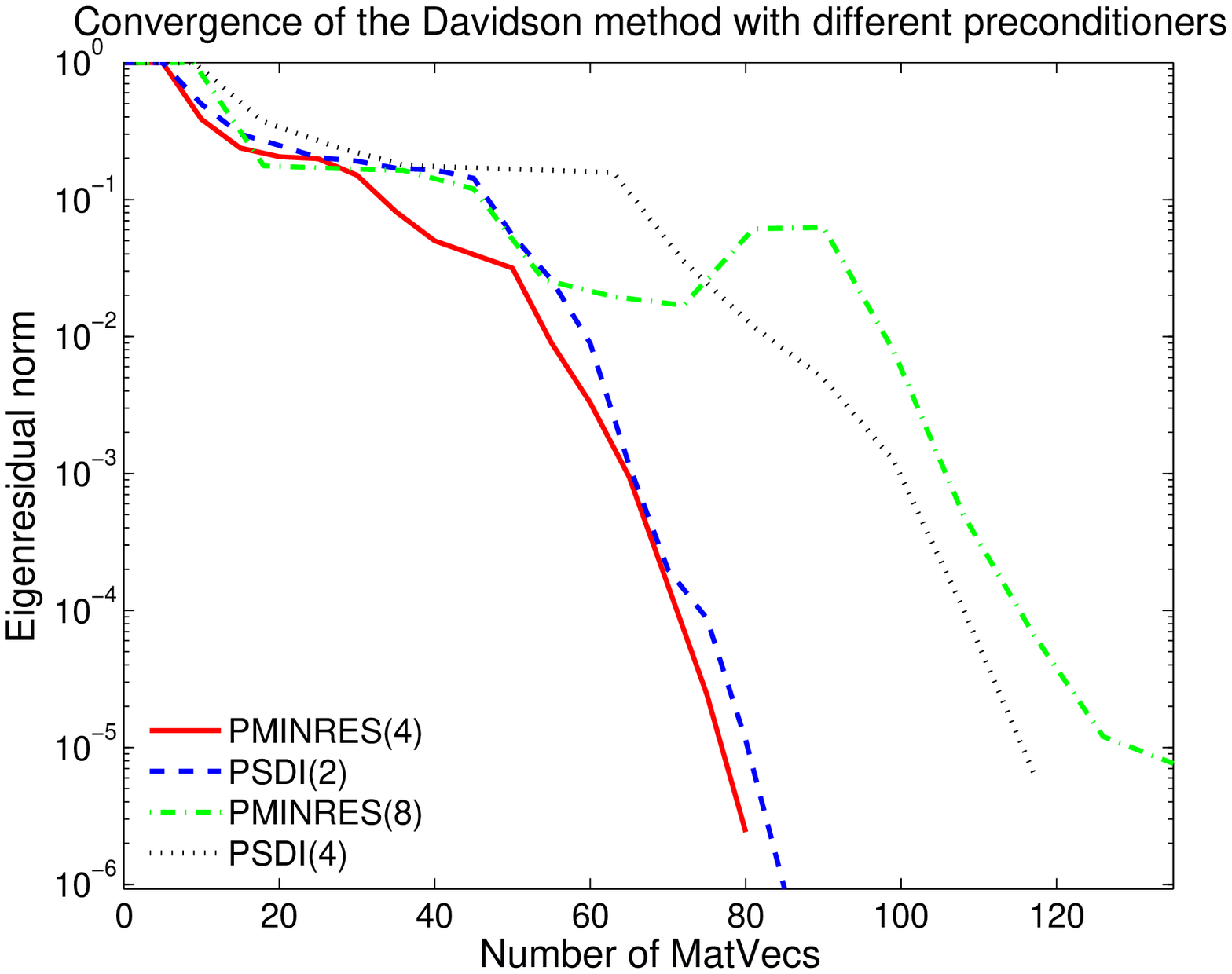} 
 \end{center} 
\caption{\small{Convergence of the Davidson method, with respect to the 
number of iterations (left) and MatVecs (right), to the eigenpair associated
with the eigenvalue closest to $\sigma = 0.2$ of the Hamiltonian matrix for the 
Si2H4 system. PSDI($t$) and PMINRES($t$) denote preconditioning options based on
$t$ steps of the corresponding linear solver.}}
\label{fig:davidson} 
\end{figure}

Figure~\ref{fig:davidson} (left) shows that the convergence to the correct eigenpair
can be recovered with 2 steps of PSDI 
and 4 steps of PMINRES 
used as a preconditioner for the Davidson method.  
In this case, the convergence of the 
PSDI-preconditioned eigensolver is similar to that
of preconditioned with PMINRES. However, the former requires less inner products and storage;
see Table~\ref{tbl:complexity}. Note that doubling the number of PSDI and PMINRES
steps 
slightly reduces the eigensolver's 
iteration count, whereas the convergence 
remains identical
for both preconditioning options.  


In Figure~\ref{fig:davidson} (right), we consider the change of the eigenresidual norm with 
respect to the number of matrix-vector products, which includes 
MatVecs generated
at the ``inner'' PSDI or PMINRES iterations as well as those produced by the ``outer''
Davidson steps. The figure demonstrates that increasing the number
of PSDI or PMINRES iterations may be counterproductive, even though the preconditioning quality
improves. As a result, we arrive at the framework where only a few steps of a linear solver
are needed, in which case the use of the PSD-like methods can represent a reasonable alternative.
to PMINRES.


\section{Conclusions}\label{sec:concl}
 
The paper presents a thorough description of the PSD-like methods for 
symmetric indefinite systems, where the preconditioner is SPD. 
Several variants of such methods are discussed and the corresponding
convergence bound is proved. This completes the existing theory for the SPD 
linear systems, expanding it to the indefinite case. Because of the slower
convergence rate, the presented PSD-like methods cannot generally be 
regarded as a substititute for the optimal PMINRES algorithm. However,
we demonstrate that for certain cases, where only a few steps of a linear
solver are needed, the PSD-like schemes can constitute an economical 
alternative.

\section*{Acknowledgements.}
The authors are thankful to Dr.~Tyrone Rees for sharing test matrix~\eqref{eq:saddle} 
for the saddle point system in Example~2. The authors also thank the anonymous referee
whose comments helped to significantly improve this manuscript. 

\bibliography{sys,plmr}

\end{document}